%
%
%
%
\documentclass{amsart}

\usepackage{graphicx}
\usepackage{tikz}
\usetikzlibrary{matrix,arrows,decorations.pathmorphing}
\usepackage{tikz-cd}
\usepackage{hyperref}
\usepackage{amsthm}
\usepackage[mathscr]{euscript}
\usepackage{amssymb}
\usepackage{enumitem}
\newtheorem{theorem}{Theorem}[section]
\newtheorem{lemma}[theorem]{Lemma}
\theoremstyle{definition}

\newtheorem{conjecture}[theorem]{Conjecture}
\newtheorem{convention}[theorem]{Convention}

\newtheorem{proposition}[theorem]{Proposition}

\theoremstyle{remark}

\numberwithin{equation}{section}



\newcommand{\Z}{\mathbb{Z}}
\newcommand{\opn}{\operatorname}
\newcommand{\wt}{\widetilde}
\newcommand{\RomanNumeralCaps}[1]
    {\MakeUppercase{\romannumeral #1}}

\begin{document}

\title{The topological period-index problem over $8$-complexes, \RomanNumeralCaps{2}}

\author{Xing Gu}
\address{Max Plank Institute for Mathematics, Vivatsgasse 7, 53111 Bonn, Germany}

\address{School of Mathematics and Statistics, the University of Melbourne, Parkville VIC 3010, Australia}
\curraddr{}
\email{gux2006@mpim-bonn.mpg.de}
\thanks{The bulk of this work was completed at the University of Melbourne, Australia, and further revisions are made at the Max Planck Institute for Mathematics, Germany. The author is grateful to the Australian Research Council, the University of Melbourne and the Max Planck Institute for Mathematics for their supports.}


\subjclass[2010]{Primary 55S45; Secondary 55R20}

\date{}

\dedicatory{}

\keywords{Brauer groups, twisted $K$-theory, period-index problems.}

\begin{abstract}
We complete the study of the topological period-index problem over $8$ dimensional connected finite CW complexes started in a preceding paper. More precisely, we determine the sharp upper bound of the index of a topological Brauer class $\alpha\in H^3(X;\Z)$, where $X$ is of the homotopy type of an $8$ dimensional finite CW complex and the period of $\alpha$ is divisible by $4$.
\end{abstract}
\maketitle
\section{introduction}
This paper is a sequel to Gu \cite{Gu}, where we investigated the topological period-index problem (TPIP) over finite CW complexes of dimension $8$, and determined the upper bound of indices of topological Brauer classes with period $n$ not divisible by $4$. In this paper we give a complete answer to the TPIP over finite $8$-complexes by studying the case $4|n$.

For a path-connected topological space $X$, let $\operatorname{Br}(X)$ be the \emph{topological Brauer group} defined in \cite{An1}, whose underlying set is the Azumaya algebras (i.e. bundles of complex matrix algebras over $X$) modulo the \emph{Brauer equivalence}: $\mathscr{A}_0$ and $\mathscr{A}_1$ are called \emph{Brauer equivalent} if there are vector bundles $\mathscr{E}_0$ and $\mathscr{E}_1$ such that
$$\mathscr{A}_0\otimes\operatorname{End}(\mathscr{E}_0)\cong \mathscr{A}_1\otimes\operatorname{End}(\mathscr{E}_1).$$
The multiplication is given by tensor product. Sometimes we call an element of $\operatorname{Br}(X)$ a \emph{topological Brauer class} of $X$.

Azumaya algebras over $X$ of degree $r$ are classified up to isomorphism by the collection of isomorphism classes of principal $PU_r$ bundles over $X$, i.e., the cohomology set $H^{1}(X;PU_r)$, where $PU_r$ is the projective unitary group of degree $r$. Consider the short exact sequence of Lie groups
\begin{equation}\label{ses1}
1\rightarrow S^1\rightarrow U_r\rightarrow PU_r\rightarrow 1
\end{equation}
where the arrow $S^1\rightarrow U_r$ is the inclusion of scalars. Then the connecting homomorphism
\begin{equation}\label{Bockstein}
H^{1}(X;PU_r)\rightarrow H^{2}(X;S^1)\cong H^{3}(X;\mathbb{Z})
\end{equation}
associates an Azumaya algebra $\mathscr{A}$ to a class $\alpha\in H^{3}(X;\mathbb{Z})$. The exactness of the sequence (\ref{ses1}) implies that
\begin{enumerate}
\item $\alpha\in H^{3}(X;\mathbb{Z})_{\textrm{tor}}$, the subgroup of torsion elements of $ H^{3}(X;\mathbb{Z})$, and
\item the class $\alpha$ only depends on the Brauer equivalence class of $\mathscr{A}$.
\end{enumerate}
An Azumaya algebra associated to a Brauer class may alternatively be described as follows. Let $\mathscr{A}$ be an Azumaya algebra of degree $r$ over a finite CW complex $X$. As discussed above, one can associate a principal $PU_r$-bundle, i.e., a homotopy class of maps $X\rightarrow\mathbf{B}PU_r$, where $\mathbf{B}PU_r$ is the classifying space of $PU_r$. An elementary computation yields $H^3(\mathbf{B}PU_r;\Z)\cong\Z/r$. The Azumaya algebra $\mathscr{A}$ is associated to a class $\alpha\in H^{3}(X;\mathbb{Z})$ if and only if the following homotopy commutative diagram
\begin{equation}\label{eq:lift}
\begin{tikzcd}
&\mathbf{B}PU_r\arrow{d}\\
X\arrow[ur,dashed]\arrow{r}{\alpha}&K(\mathbb{Z},3)
\end{tikzcd}
\end{equation}
exists such that the dashed arrow represents the (isomorphism class) $PU_r$-bundle classifying $\mathscr{A}$, the vertical arrow corresponds to a generator of $H^3(\mathbf{B}PU_r)\cong\Z/r$, and the bottom arrow corresponds to the cohomology class $\alpha$.

Therefore, there is a homomorphism $\textrm{Br}(X)\rightarrow H^{3}(X;\mathbb{Z})_{\textrm{tor}}$, and we call $H^{3}(X;\mathbb{Z})_{\textrm{tor}}$ the cohomological Brauer group of $X$, and sometimes denote it by $\textrm{Br}'(X)$. Serre \cite{Gr} showed that when $X$ is a finite CW complex, this homomorphism is an isomorphism. Hence, for any $\alpha\in H^{3}(X;\mathbb{Z})_{\textrm{tor}}$, there is some $r$ such that a $PU_r$-torsor over $X$ is associated to $\alpha$ via the homomorphism (\ref{Bockstein}).

Let $\operatorname{per}(\alpha)$ denote the order of $\alpha$ as an element of the group $H^{3}(X;\mathbb{Z})$. Serre \cite{Gr} also showed $\operatorname{per}(\alpha)|r$ for all $r$ such that there is a $PU_r$-torsor over $X$ associated to $\alpha$ in the way described above. Let $\operatorname{ind}(\alpha)$ denote the greatest common divisor of all such $r$, then in particular we have
\begin{equation}\label{per div ind}
\operatorname{per}(\alpha)|\operatorname{ind}(\alpha).
\end{equation}

The preceding definitions are motivated by their algebraic analogs. We refer to the introduction of \cite{An} for the algebraic version of the definitions as well as more algebraic backgrounds. The period-index conjecture, stated as follows, plays a key role in the study of Brauer groups:

\begin{conjecture}[\cite{Co}, Colliot-Th{\'e}l{\`e}ne]\label{algconj}
Let $k$ be either a $C_d$-field or the function field of a $d$-dimensional variety over an algebraically closed field. Let $\alpha\in \textrm{Br}(k)$, and suppose that $\operatorname{per}(\alpha)$ is prime to the characteristic of $k$. Then
\[\operatorname{ind}(\alpha)|\operatorname{per}(\alpha)^{d-1}.\]
\end{conjecture}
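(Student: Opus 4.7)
My plan would be to imitate the only known successful strategy for this conjecture, namely de Jong's theorem in the case $d=2$, and to sketch how one might push it inductively. Standard multiplicativity reductions let me assume $\operatorname{per}(\alpha)=\ell$ is a single prime distinct from $\operatorname{char}(k)$, so the goal becomes $\operatorname{ind}(\alpha)\mid \ell^{d-1}$. Equivalently, choosing an Azumaya algebra $\mathscr{A}$ representing $\alpha$, it suffices to produce a locally free right $\mathscr{A}$-module of generic rank $\ell^{d-1}$, since the endomorphism algebra of such a module represents $\alpha$ and has degree $\ell^{d-1}$. I would therefore pass from $k$ to a smooth projective model $X$ of dimension $d$ over the algebraically closed base field, extend $\mathscr{A}$ to an Azumaya $\mathcal{O}_X$-algebra after resolving ramification by blow-ups, and translate the problem into producing an $\alpha$-twisted coherent sheaf on $X$ of generic rank $\ell^{d-1}$.

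The key geometric step is the deformation theory of the moduli stack $\mathcal{M}_\alpha$ of such twisted sheaves. One computes the expected dimension of $\mathcal{M}_\alpha$ by an $\operatorname{Ext}$-theoretic calculation, chooses a Chern character so that this expected dimension is non-negative, and then tries to exhibit a rational point. In the $d=2$ case this is achieved by specializing to a surface fibered in genus-one curves, smoothing the twisted sheaf until it is supported on a single fiber, and invoking arithmetic over that fiber. For general $d$ the natural idea is to induct on the dimension by choosing a Lefschetz pencil whose fibers are smooth $(d-1)$-folds, and to bound $\operatorname{ind}(\alpha)$ by a controlled combination of the indices of the restrictions $\alpha|_{X_t}$.

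The hard part, and the reason Conjecture \ref{algconj} remains open for $d\ge 3$, is that this inductive step collapses in every formulation I know. Concretely, the obstruction space $\operatorname{Ext}^2$ for twisted sheaves on a variety of dimension $\ge 3$ is generically nonzero, so $\mathcal{M}_\alpha$ need not be smooth of the expected dimension, and rational-point arguments imported from the surface case fail at once. Worse, the restriction of $\alpha$ to a generic hyperplane section may have strictly smaller period, so the inductive hypothesis gives no useful control on $\operatorname{ind}(\alpha|_{X'})$ in terms of $\operatorname{per}(\alpha)$. I would expect any decisive new ingredient to be either a higher-dimensional analogue of the Graber--Harris--Starr theorem on sections of fibrations, or a motivic or $K$-theoretic decomposition of the class $\alpha$. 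My proposal is thus a blueprint for the only approach with a track record of success rather than a self-contained plan, and it honestly identifies the point where genuinely new ideas are required.
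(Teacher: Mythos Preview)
The statement you were asked to address is a \emph{conjecture}, not a theorem, and the paper does not prove it. Conjecture~\ref{algconj} is Colliot-Th\'el\`ene's period-index conjecture, stated purely as motivation for the topological analog that the paper actually studies. There is therefore no ``paper's own proof'' to compare your proposal against.

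To your credit, you recognized this: your write-up is explicitly a blueprint rather than a proof, and you correctly identify that the conjecture is open for $d\ge 3$, that de Jong's argument handles $d=2$, and that the obstruction-theoretic and inductive steps break down in higher dimension. That assessment is accurate. But as a proof of the stated conjecture, the proposal is by design incomplete, and no comparison with the paper is possible because the paper makes no attempt to prove it.
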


Inspired by and to shed light on the above conjecture, in \cite{An}, Antieau and Williams proposed a topological analog of Conjecture \ref{algconj}, as follows:

\textit{For a given Bruaer class $\alpha$ of a finite CW complexes $X$ of dimension $2d$, what can be said about the integers $\operatorname{ind}(\alpha)$ and $\operatorname{per}(\alpha)^{d-1}$? More specifically, does the former divides the latter?}

Antieau and Williams in \cite{An} give a negative answer to the above question in the case $d=3$ by showing that $\operatorname{ind}(\alpha)$ may be as great as $2\operatorname{per}(\alpha)^2$, depenting on the parity of $n$. In \cite{Cr} Crowley and Grant give a positive answer in the case $d=3$ and $X$ is a $Spin^c$ manifold. In \cite{Gu} the author shows that for $d=4$ the answer is negative and  that $\operatorname{ind}(\alpha)$ may be as great as $6\operatorname{per}(\alpha)^3$, depending on the parity of $n$, as well as whether $3|n$. (See Theorem \ref{last} below.) In this paper we continue our exploration in this direction.

For further explanations of the preceding definitions and backgrounds on the topological period-index problem, see \cite{An}, \cite{An1}, \cite{Cr} and \cite{Gu}. All definitions and notations in this paper are consistent with those in \cite{Gu}. More precisely, the expression $\epsilon_p(n)$ denotes the greatest common divisor of a prime $p$ and a positive integer $n$, and we let $\beta_n$ be the canonical generator of $H^3(K(\mathbb{Z}/n,2);\mathbb{Z})$, i.e., the image of the identity class in $H^2(K(\mathbb{Z}/n,2);\mathbb{Z}/n)$ under the Bockstein homomorphism. 

In \cite{Gu} we have shown the following

\begin{theorem}[Theorem 1.6, \cite{Gu}]\label{last}
Let $X$ be a topological space of homotopy type of an $8$-dimensional connected finite CW-complex, and let $\alpha\in H^{3}(X;\mathbb{Z})_{\operatorname{tor}}$ be a topological Brauer class of period $n$. Then
\begin{equation}\label{bound}
\operatorname{ind}(\alpha)|\epsilon_{2}(n)\epsilon_{3}(n)n^3.
\end{equation}
In addition, if $X$ is the $8$-skeleton of $K(\mathbb{Z}/n,2)$, and $\alpha$ is the restriction of the fundamental class $\beta_n\in H^{3}(K(\mathbb{Z}/n,2),\mathbb{Z})$, then
\begin{equation*}
\begin{cases}
\operatorname{ind}(\alpha)=\epsilon_{2}(n)\epsilon_{3}(n)n^3,\quad\textrm{$4\nmid n$,}\\
\epsilon_{3}(n)n^3|\operatorname{ind}(\alpha), \quad\textrm{$4|n$.}
\end{cases}
\end{equation*}
In particular, the sharp lower bound of $e$ such that $\operatorname{ind}(\alpha)|n^{e}$ for all $X$ and $\alpha$ is $4$.
\end{theorem}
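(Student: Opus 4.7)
I would translate the problem to twisted $K$-theory: a lift in diagram~\eqref{eq:lift} is the same datum as a rank-$r$ class in twisted $K$-theory $K^{0}_\alpha(X)$, so $\operatorname{ind}(\alpha)$ is the smallest $r$ for which such a class exists. The principal tool is the Atiyah--Hirzebruch spectral sequence (AHSS) for $K^{\ast}_\alpha(X)$, with $E_2^{p,q}=H^{p}(X;K^{q}(\mathrm{pt}))$; for $\dim X\le 8$ the only differentials that can prevent the unit class $1\in E_2^{0,0}$ from converging are $d_3^\alpha$, $d_5^\alpha$, and $d_7^\alpha$. I want to determine the smallest positive integer $r$ for which $r\cdot 1$ is a permanent cycle, since that integer is then an upper bound for $\operatorname{ind}(\alpha)$.

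\textbf{Upper bound.} The formula $d_3^\alpha(1)=\alpha$ (Atiyah--Segal) immediately shows that $n\cdot 1$ is a $d_3$-cycle, contributing the first factor of $n$. Next, $d_5^\alpha(n\cdot 1)\in H^{5}(X;\Z)/\mathrm{im}\,d_3$ is expressible as a secondary cohomology operation in $\alpha$; by naturality I would pull back along the classifying map $X\to K(\Z,3)$ and control this obstruction using the known structure of $H^{\ast}(K(\Z,3);\Z)$, showing that its $3$-primary part is killed after multiplying by $\epsilon_3(n)\, n$ and its $2$-primary part by $n$. Finally, $d_7^\alpha$ applied to $\epsilon_3(n)\,n^2\cdot 1$ lands in $H^{7}(X;\Z)$ modulo images; analyzing it via integral Steenrod operations yields that an additional factor $\epsilon_2(n)$ is enough to kill it. Combining these gives a class of rank $\epsilon_2(n)\epsilon_3(n)n^3$ in $K^{0}_\alpha(X)$, hence the divisibility~\eqref{bound}.

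\textbf{Lower bound at the universal example.} For $X=K(\Z/n,2)^{(8)}$ and $\alpha=\beta_n|_X$, one runs the same AHSS and proves that the obstruction classes identified above are nonzero unless their coefficients are divisible by the stated integers. The cohomology of $K(\Z/n,2)$ through degree $8$ is known from Cartan--Serre, and the relevant operations applied to $\beta_n$ can be read off from the Steenrod algebra action on the fundamental class. In this way I would argue that any lift to $\mathbf{B}PU_r$ forces $\epsilon_3(n)n^3\mid r$; when $4\nmid n$, an additional $2$-primary calculation pins down the remaining factor $\epsilon_2(n)$, yielding equality. The final claim on the sharp exponent $e=4$ follows on inspecting prime factorizations of $\epsilon_p(n)n^3$.

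\textbf{Main obstacle.} The hardest step is the $d_7^\alpha$-analysis at the prime $2$. For $4\nmid n$, the $2$-primary obstruction can be identified with an explicit class in $H^{8}(K(\Z/n,2);\Z)$ by a direct Steenrod-algebra argument. When $4\mid n$, however, further $2$-primary interactions create potential hidden differentials and extension problems that must be ruled out by a more delicate analysis involving integral cohomology operations and the detailed mod-$2$ structure of $H^{\ast}(K(\Z/n,2))$; this is precisely the residual case the present paper is designed to handle.
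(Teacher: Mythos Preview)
This theorem is not proved in the present paper; it is quoted verbatim as Theorem~1.6 of \cite{Gu}. So strictly speaking there is no proof here to compare against. What the paper does review, however, is the \emph{method} of \cite{Gu}, and that method differs substantially from your sketch: rather than computing the twisted AHSS differentials $\wt d_5,\wt d_7$ directly, \cite{Gu} solves the lifting problem~\eqref{lift'} by climbing the Postnikov tower of $\mathbf{B}P(n,mn)$ and determining the k-invariants $\kappa_3,\kappa_5$ (see \eqref{Postnikov} and the Serre spectral sequence ${}^{S}E_{*}^{*,*}$ of Section~3). The twisted AHSS reformulation you propose is correct in principle---it is exactly Theorem~\ref{AH diff}---but the paper uses it only as an \emph{interpretation} of the result, not as the engine of the proof.

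There is a concrete error in your outline: you have the primes at $d_5$ and $d_7$ swapped. From \eqref{K(Z/2,2)} one has $H^{5}(K(\Z/n,2);\Z)\cong\Z/\epsilon_{2}(n)n$ (generated by $Q_n$, a $2$-primary class arising via $\beta\opn{Sq}^2$) and $H^{7}(K(\Z/n,2);\Z)\cong\Z/\epsilon_{3}(n)n$ (generated by $R_n$, whose extra $3$-torsion comes from $\beta P^1$). Hence the extra factor $\epsilon_2(n)$ enters at $\wt d_5$ and $\epsilon_3(n)$ at $\wt d_7$, not the reverse. The product $\epsilon_2(n)\epsilon_3(n)n^3$ is unaffected, but your attribution of which prime obstructs at which stage is wrong, and this matters for the lower-bound argument and for the $4\mid n$ case.

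The more serious gap is that you assert $\wt d_5,\wt d_7$ can be ``expressed as secondary cohomology operations'' and ``analyzed via integral Steenrod operations'' without saying how. Unlike $\wt d_3=-\alpha\cup(-)+\opn{Sq}^3_{\Z}$, the higher twisted differentials have no simple closed formula; identifying them is precisely the hard content of \cite{Gu}. That paper circumvents the issue by working instead with the Postnikov system of $\mathbf{B}P(n,mn)$, where the obstructions become the k-invariants $\kappa_i$ and can be pinned down via the Serre spectral sequence (Lemmas~\ref{Sd_5}--\ref{cardH^7BP}). Your plan, as written, does not supply a mechanism for the key computations.
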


The goal of this paper is to improve Theorem \ref{last} and show the following

\begin{theorem}\label{main}
Let $X$ be a topological space of homotopy type of an $8$-dimensional connected finite CW-complex, and let $\alpha\in H^{3}(X;\mathbb{Z})_{\operatorname{tor}}$ be a topological Brauer class of period $n$. Then
\begin{equation}\label{bound1}
\begin{cases}
\operatorname{ind}(\alpha)|2\epsilon_{3}(n)n^3,\textrm{ if }n\equiv 2\pmod{4},\\
\operatorname{ind}(\alpha)|\epsilon_{3}(n)n^3,\textrm{ otherwise}.
\end{cases}
\end{equation}
In addition, if $X$ is the $8$-skeleton of $K(\mathbb{Z}/n,2)$ and $\alpha$ is the restriction of the fundamental class $\beta_n\in H^{3}(K(\mathbb{Z}/n,2),\mathbb{Z})$, then
\begin{equation*}
\begin{cases}
\operatorname{ind}(\alpha)=2\epsilon_{3}(n)n^3,\textrm{ if }n\equiv 2\pmod{4},\\
\operatorname{ind}(\alpha)=\epsilon_{3}(n)n^3,\textrm{ otherwise}.
\end{cases}
\end{equation*}
In particular, the sharp lower bound of $e$ such that $\operatorname{ind}(\alpha)|n^{e}$ for all $X$ and $\alpha$ is $4$.
\end{theorem}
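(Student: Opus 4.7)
The plan is to reduce the general upper bound in (\ref{bound1}) to the universal example $Y:=(K(\Z/n,2))^{(8)}$ with the restricted class $\beta_n|_{Y}$, and then to compute $\operatorname{ind}(\beta_n|_Y)$ exactly. The reduction is standard: any Brauer class $\alpha$ of period $n$ on an $8$-dimensional CW complex $X$ is classified by a map $X\to K(\Z/n,2)$ which, by cellular approximation, factors through the inclusion $Y\hookrightarrow K(\Z/n,2)$; hence any lift of $\beta_n|_Y$ through $\mathbf{B}PU_r\to K(\Z,3)$ pulls back to a lift of $\alpha$, giving $\operatorname{ind}(\alpha)\mid\operatorname{ind}(\beta_n|_Y)$. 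The equality statement for $Y$ itself then yields the sharpness asserted in the theorem.

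When $4\nmid n$, both the upper bound $\operatorname{ind}(\beta_n|_Y)\mid\epsilon_{2}(n)\epsilon_{3}(n)n^{3}$ and the equality $\operatorname{ind}(\beta_n|_Y)=\epsilon_{2}(n)\epsilon_{3}(n)n^{3}$ are already supplied by Theorem~\ref{last}; since $\epsilon_{2}(n)=2$ when $n\equiv 2\pmod{4}$, this matches (\ref{bound1}) in those cases. Hence the only case requiring genuinely new work is $4\mid n$, where Theorem~\ref{last} leaves a factor-of-$2$ gap between the lower bound $\epsilon_{3}(n)n^{3}\mid\operatorname{ind}(\beta_n|_Y)$ and the upper bound $\operatorname{ind}(\beta_n|_Y)\mid 2\epsilon_{3}(n)n^{3}$. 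The task is to close this gap by establishing $\operatorname{ind}(\beta_n|_Y)\mid\epsilon_{3}(n)n^{3}$ whenever $4\mid n$.

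To establish this, I would localize at the prime $2$ and analyze the lifting obstructions in the Postnikov tower of $\mathbf{B}PU_{r}\to K(\Z,3)$ for $r=\epsilon_{3}(n)n^{3}$ (equivalently, examine the higher differentials in the Atiyah--Hirzebruch spectral sequence for $\beta_n$-twisted $K$-theory of $Y$). The $2$-primary obstruction responsible for the spurious factor of $2$ in \cite{Gu} lives in some $H^{k}(Y;\Z/2)$ with $k\le 8$ and can be written as a differential built from $Sq^{3}$, higher Steenrod operations, a secondary operation, and cup products with $\beta_n$ reduced modulo $2$. The key step is to show that when $4\mid n$ this obstruction class either vanishes outright or lies in the image of an earlier differential, exploiting an additional $2$-local divisibility in $H^*(K(\Z/n,2);\Z)$ that is present precisely when $4\mid n$ and that lowers the $2$-local complexity of $\beta_n$ just enough to annihilate the offending obstruction.

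The main obstacle will be the explicit identification and cancellation of this higher obstruction on the $8$-skeleton $Y$. The mod-$2$ cohomology of $K(\Z/n,2)$ for $4\mid n$ carries a more intricate Steenrod-algebra and integral Bockstein structure than the $4\nmid n$ case handled in \cite{Gu}, so a careful case-by-case tracking through the Postnikov filtration (equivalently through the $E_{5}$ and $E_{7}$ pages of the twisted AHSS) will be needed to verify that each potentially obstructing $\bmod\,2$ class either restricts to zero on $Y$ or is killed by a prior differential. Once this vanishing is in hand, the upper bound $\operatorname{ind}(\beta_n|_Y)\mid\epsilon_{3}(n)n^{3}$ follows, and combining with the lower bound from Theorem~\ref{last} yields the exact equality claimed in (\ref{bound1}) for the universal example; the general divisibility in (\ref{bound1}) then follows from the reduction above.
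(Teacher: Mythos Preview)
Your reduction to the universal example and the localization at $2$ are correct and match the paper's overall strategy. The paper likewise reduces to the case $n=2^r$, $r>1$, and studies the lifting problem along the Postnikov tower of (a mild refinement of) $\mathbf{B}PU_r$, namely $\mathbf{B}P(n,mn)$ with $m=2^{2r}$.

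However, your proposal has a genuine gap at the technical core. You write that the spurious $2$-primary obstruction ``lives in some $H^k(Y;\Z/2)$'' and will vanish due to ``an additional $2$-local divisibility in $H^*(K(\Z/n,2);\Z)$ present precisely when $4\mid n$''. Neither of these is quite right, and more importantly you do not name the obstruction or give a mechanism for killing it. In the paper the obstruction is the integral $k$-invariant $\kappa_5\in H^7(K(\Z/n,2)\times K(\Z,4);\Z)$; with Convention~\ref{convention} one shows it has the form
\[
\kappa_5=\mu\,R_n\times 1+\beta_n\times\iota_4+1\times\Gamma_4,\qquad \mu\in\{0,\,2^{r-1}\},
\]
and the entire question is whether $\mu=0$. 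No divisibility in $H^*(K(\Z/n,2);\Z)$ distinguishes the two possibilities; both are consistent with all the constraints coming from the target alone (this is the content of Lemma~\ref{statement}).

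The paper resolves $\mu$ by an auxiliary construction you do not anticipate: take the homotopy fiber $Y$ of $e_2'\colon\mathbf{B}P(n,mn)\to K(\Z,4)$, obtaining a fibration $K(\Z,3)\xrightarrow{h} Y\to\mathbf{B}P(n,mn)$. One first runs the Serre spectral sequence with $\Z/2$ coefficients to see that $\bar\iota_3^2$ is a permanent cocycle (using $\operatorname{Sq}^3(\bar e_2')\equiv\bar x_1'\bar e_2'$, which holds because $\mu\equiv 0\bmod 2$ regardless), hence $h^*$ hits $\iota_3^2$ integrally. Feeding this back into the integral Serre spectral sequence forces $\operatorname{Sq}_{\Z}^3(e_2')\in(x_1'e_2')$, which by Lemma~\ref{statement} yields $\mu=0$. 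Only then does $f_5^*(\kappa_5)=0$ follow and the lift exist. Your proposal lacks this step entirely; ``tracking through the $E_5$ and $E_7$ pages'' of the twisted AHSS is equivalent to knowing $\kappa_5$, which is exactly what requires the auxiliary fiber $Y$ to pin down.
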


One readily sees that the only thing remains to show is the following
\begin{proposition}\label{mainprop}
Let $X$ be the $8$-skeleton of $K(\mathbb{Z}/n,2)$ and $\alpha\in H^3(X;\Z)$ the restriction of $\beta_n$. When $4|n$, we have
\[\operatorname{ind}(\alpha)=\epsilon_3(n)n^3.\]
\end{proposition}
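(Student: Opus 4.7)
The plan is to focus exclusively on establishing the upper bound $\opn{ind}(\alpha)\,|\,\epsilon_3(n)n^3$, since Theorem \ref{last} already supplies both the reverse divisibility $\epsilon_3(n)n^3\,|\,\opn{ind}(\alpha)$ and the weaker upper bound $\opn{ind}(\alpha)\,|\,\epsilon_2(n)\epsilon_3(n)n^3=2\epsilon_3(n)n^3$ in the case $4|n$. Consequently $\opn{ind}(\alpha)$ is already pinned down to one of the two values $\epsilon_3(n)n^3$ or $2\epsilon_3(n)n^3$, and it suffices to exhibit a single $PU_r$-torsor over $X$ classifying $\alpha$ with $r=\epsilon_3(n)n^3$; equivalently, to fill in the dashed arrow of diagram \eqref{eq:lift} for this smaller value of $r$.

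The lifting problem is controlled by the Moore--Postnikov tower of the fibration $\mathbf{B}U_r\to\mathbf{B}PU_r\to K(\Z,3)$, and the analysis of its $k$-invariants in degrees $\le 8$ is the technical heart of \cite{Gu}. I would recall that setup and isolate the single obstruction class $o\in H^8(X;\pi)$, for the appropriate $2$-primary coefficient group $\pi$, whose non-vanishing in \cite{Gu} is precisely what was responsible for the extra factor of $\epsilon_2(n)$ in the upper bound; this is the only obstacle between the established bound and the sharper one we seek.

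The core new step is then to verify that $o$ vanishes on $X=K(\Z/n,2)^{(8)}$ for $\alpha=\beta_n|_X$ when $4|n$. I expect to express $o$ as an explicit secondary cohomology operation built from mod $2$ Steenrod squares and the mod $n$ Bockstein evaluated on $\beta_n$. Under the hypothesis $4|n$, the relevant mod $2$ reductions factor through mod $4$ coefficients, which forces the pertinent class to lie in the image of multiplication by $2$ in the known cohomology ring $H^{\ast}(K(\Z/n,2);\Z/2^{k})$. One then either reads off $o=0$ directly from this ring, or exploits the indeterminacy obtained by redefining the lift over the $7$-skeleton to absorb $o$.

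The main obstacle is precisely this book-keeping at the prime $2$: identifying the secondary operation, evaluating it on $\beta_n$, and verifying that the indeterminacy is large enough (or the value small enough) to conclude vanishing. No new input at odd primes is needed, since the $\epsilon_3(n)n^3$ factor and the lower bound for $4|n$ are already established in \cite{Gu}; once the dimension $8$ obstruction at the prime $2$ is killed, Proposition \ref{mainprop}, and therefore Theorem \ref{main}, follows immediately.
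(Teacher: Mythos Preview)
Your high-level strategy is sound and matches the paper: reduce to the $2$-primary part, recognise that by Theorem~\ref{last} only one factor of $2$ is at stake, and solve a single lifting/obstruction problem. The paper does exactly this, though it works with the Postnikov tower of $\mathbf{B}P(n,mn)$ over $K(\Z/n,2)$ rather than the Moore--Postnikov tower of $\mathbf{B}PU_r\to K(\Z,3)$; in that setup the sole obstruction for an $8$-complex is $f_5^*(\kappa_5)\in H^7(K(\Z/n,2);\Z)$, not a class in $H^8$.

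Where your proposal falls short is the key technical step. You write that you ``expect'' to identify the obstruction as a secondary operation and then kill it by a divisibility argument (``mod $2$ reductions factor through mod $4$ coefficients''). This is precisely the point that was left open in \cite{Gu}, and the naive divisibility heuristic does not by itself resolve it: the issue is whether $\kappa_5$ contains the term $2^{r-1}R_n\times 1$, and that coefficient is already $\equiv 0\pmod 2$, so looking mod $2$ does not distinguish the two cases in Lemma~\ref{statement}. Nothing in your sketch explains how to decide between $\mu=0$ and $\mu=2^{r-1}$, and the indeterminacy suggestion is equally vague.

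The paper's resolution is an idea your outline does not contain: it introduces the auxiliary space $Y=\operatorname{hofib}(\mathbf{B}P(n,mn)\xrightarrow{e_2'} K(\Z,4))$ and runs the Serre spectral sequence of $K(\Z,3)\to Y\to \mathbf{B}P(n,mn)$, first with $\Z/2$ coefficients (this is where $r>1$, i.e.\ $4\mid n$, is genuinely used, to know the mod $2$ reduction of $\kappa_5$) and then with $\Z$ coefficients. This shows $\iota_3^2$ is a permanent cocycle, forcing $\operatorname{Sq}^3_{\Z}(e_2')\in(x_1'e_2')$ in $H^7(\mathbf{B}P(n,mn);\Z)$, which by the uniqueness clause of Lemma~\ref{statement} pins down $\mu=0$ and hence $\kappa_5=\beta_n\times\iota_4+1\times\Gamma_4$. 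The pullback along the inclusion $K(\Z/n,2)\hookrightarrow K(\Z/n,2)\times K(\Z,4)$ then visibly kills both terms. Without this auxiliary-fibration argument (or something of comparable strength), your outline does not close the gap.
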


Theorem \ref{main} may be interpreted in terms of the twisted complex $K$-theory and the twisted Atiyah-Hirzebruch spectral sequences (AHSS). For a CW complex $X$ and a class $\alpha\in H^3(X;\Z)$, the twisted complex $K$-theory associates the pair $(X,\alpha)$ to a graded ring $K^*(X)_{\alpha}$ in a contravariant manner. For $\alpha=0$, it is the usual complex $K$-theory $K^*(X)$. The readers may refer to \cite{At} and \cite{At1} for more backgrounds.

As for the usual complex $K$-theory, there is a twisted AHSS of the pair $(X,\alpha)$ converging to the twisted K-theory $K^*(X)_{\alpha}$. We denote it by $(\wt{E}_*^{*,*}, \wt{d}_*^{*,*})$. The spectral sequence satisfies
\[ \wt{E}_2^{s,t}\cong H^s(X;K^t(\opn{pt}))\]
where $K^*$ denotes the complex topological $K$-theory in the usual sense. By the Bott Periodicity Theorem, we have
\begin{equation}\label{AH}
 \wt{E}_2^{s,t}\cong
 \begin{cases}
 H^s(X;\mathbb{Z}), t \textrm{ even},\\
 0, t \textrm{ odd}.
 \end{cases}
\end{equation}
The twisted AHSS is considered in \cite{An}, \cite{An1}, \cite{An2} and \cite{Gu}. The following result can be derived immediately from \cite{An1}.

\begin{theorem}[Theorem 3.1, \cite{Gu}]\label{AH diff}
Let $X$ be a connected finite CW-complex and let $\alpha\in\textrm{Br}(X)$. Consider $\wt{E}_{*}^{*,*}$, the twisted Atiyah-Hirzebruch spectral sequence of the pair $(X,\alpha)$ with differentials $\wt{d}_{r}^{s,t}$ with bi-degree $(r, -r+1)$. In particular, $\wt{E}_{2}^{0,0}\cong\mathbb{Z}$, and any $\wt{E}_{r}^{0,0}$ with $r>2$ is a subgroup of $\mathbb{Z}$ and therefore generated by a positive integer. The subgroup $\wt{E}_{3}^{0,0}$ (resp.  $\wt{E}_{\infty}^{0,0}$) is generated by $\operatorname{per}(\alpha)$ (resp. $\operatorname{ind}(\alpha))$.
\end{theorem}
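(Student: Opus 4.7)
The plan is to handle the two assertions separately: first use the Atiyah--Segal formula for the $d_3$-differential of the twisted AHSS to pin down $\wt{E}_3^{0,0}$, and then use the convergence of the spectral sequence, together with the interpretation of $K^0(X)_\alpha$-classes by $\alpha$-twisted vector bundles developed in \cite{An1}, to identify $\wt{E}_\infty^{0,0}$ with $\opn{ind}(\alpha)\Z$.

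For the first claim, I would invoke the Atiyah--Segal formula from \cite{At1}, which describes $\wt{d}_3^{s,t}$ on $\wt{E}_2^{s,t}\cong H^s(X;\Z)$ (for $t$ even) as $x\mapsto Sq^3_{\Z}(x)-x\smile\alpha$, where $Sq^3_{\Z}=\beta\circ Sq^2\circ\rho_2$ is the integral Steenrod operation. Specializing to $(s,t)=(0,0)$ and to the generator $1\in H^0(X;\Z)$, the term $Sq^3_\Z(1)$ vanishes, so $\wt{d}_3^{0,0}(k)=-k\alpha$ for $k\in\Z$. Since $\wt{E}_2^{-3,2}=0$, there is no incoming $d_3$, and $\wt{E}_3^{0,0}$ is precisely the kernel of multiplication by $\alpha$ on $\Z$, namely $\opn{per}(\alpha)\Z$.

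For the second claim, the spectral sequence converges to the filtered group $K^0(X)_\alpha$ with $\wt{E}_\infty^{0,0}$ being the bottom associated graded piece $F^0/F^1$, where $F^s$ consists of classes vanishing on the $(s-1)$-skeleton. Since $F^0=K^0(X)_\alpha$ and $F^1$ is the kernel of restriction to any $0$-cell, the edge homomorphism identifies $\wt{E}_\infty^{0,0}$ with the image of the rank map $\rho\colon K^0(X)_\alpha\to K^0(\opn{pt})=\Z$. I would then identify $\opn{im}(\rho)$ with $\opn{ind}(\alpha)\Z$ using the correspondence between $\alpha$-twisted vector bundles and Azumaya algebras with Brauer class $\alpha$: an $\alpha$-twisted bundle $E$ of rank $r$ yields an Azumaya algebra $\opn{End}(E)$ of degree $r$ with class $\alpha$, and conversely every such Azumaya algebra arises in this way via a local system of rank-$r$ modules. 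Hence every rank $r$ appearing in $\opn{im}(\rho)$ is the degree of an Azumaya algebra with class $\alpha$, and combined with closure under direct sums and virtual differences this forces $\opn{im}(\rho)=\opn{ind}(\alpha)\Z$ by the definition of $\opn{ind}(\alpha)$ as a GCD of such degrees.

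The main obstacle is the representability of arbitrary classes in $K^0(X)_\alpha$ by (virtual) $\alpha$-twisted vector bundles on the finite CW complex $X$: this is the nontrivial geometric input from \cite{An1}, without which one cannot cleanly match $\opn{im}(\rho)$ with the set of realizable Azumaya degrees. The Atiyah--Segal formula for $\wt{d}_3$ and the spectral-sequence bookkeeping are then essentially formal.
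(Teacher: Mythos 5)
Your argument is correct and is essentially the standard one: the paper does not reprove this statement but cites it from \cite{Gu} and \cite{An1}, and your two steps --- the Atiyah--Segal formula $\wt{d}_3^{0,0}(k)=-k\alpha$ giving $\wt{E}_3^{0,0}=\opn{per}(\alpha)\Z$, and the identification of $\wt{E}_\infty^{0,0}$ with the image of the rank map via the edge homomorphism and the representability of twisted $K$-theory classes by twisted vector bundles on a finite CW complex --- are exactly the ingredients of the proof in \cite{An1}. You also correctly isolate the one nontrivial input (group completion of the monoid of $\alpha$-twisted bundles computing $K^0(X)_\alpha$), so nothing further is needed.
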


Following Theorem \ref{AH diff}, it can be easily deduced from Theorem B of \cite{An} and Theorem \ref{last} that when $X=K(\mathbb{Z}/n,2)$ and $\alpha=\beta_n$, the differentials $\wt{d}_r^{0,0}$ are surjective onto entries on the $E_2$-page for $r<7$, and when $4\nmid n$, also for $r=7$. Theorem \ref{main}, however, provides the first known example of a $\wt{d}_r^{0,0}$ that is NOT surjective onto some $\wt{E}_2^{s,t}$.
\begin{proposition}
Suppose $4|n$. In the twisted Atiyah-Hirzebruch spectral sequence $(\wt{E}_*^{*,*}, \wt{d}_*^{*,*})$ of the pair $(K(\mathbb{Z}/n,2),\beta_n)$, the image of $\wt{d}_7^{0,0}$ is $2\wt{E}_2^{7,-6}$.
\end{proposition}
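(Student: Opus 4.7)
The plan is to deduce the proposition as a direct corollary of Theorem \ref{main}, Theorem \ref{AH diff}, and the spectral-sequence analysis already carried out in \cite{Gu}. Throughout, write $X$ for the $8$-skeleton of $K(\Z/n,2)$; by naturality of the twisted AHSS with respect to the inclusion $X\hookrightarrow K(\Z/n,2)$, together with the fact that the relevant entries $\wt{E}_r^{s,t}$ lie in cohomological degrees $s\le 7$ where this inclusion induces an isomorphism on cohomology, it suffices to compute the image of $\wt{d}_7^{0,0}$ in the spectral sequence of $(X,\beta_n|_X)$, and Theorem \ref{main} then applies to $X$.

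First I would isolate the differentials out of $(0,0)$ that can be nonzero. Since $\wt{d}_r$ has bidegree $(r,-r+1)$ and the $E_2$-page vanishes in odd $t$-rows by (\ref{AH}), only odd $r$ contribute; since $H^s(X;\Z)=0$ for $s\ge 9$, we need $r\le 7$. So the only candidates are $\wt{d}_3^{0,0}$, $\wt{d}_5^{0,0}$, $\wt{d}_7^{0,0}$. By the observation recorded in the paragraph preceding the proposition, $\wt{d}_3^{0,0}$ and $\wt{d}_5^{0,0}$ are already surjective onto their $E_2$-page targets. Together with the vanishing on pages $\le 6$ of all incoming differentials at position $(7,-6)$ (the candidates being $\wt{d}_3^{4,-4}$, whose vanishing is worked out in \cite{Gu}, and $\wt{d}_5^{2,-2}$, whose source $H^2(X;\Z)$ is zero), this yields
\begin{equation*}
\wt{E}_7^{0,0}=N\Z\ \text{with}\ N=n\cdot|H^5(X;\Z)|,\qquad \wt{E}_7^{7,-6}=\wt{E}_2^{7,-6}=H^7(X;\Z).
\end{equation*}

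Next, Theorem \ref{AH diff} combined with Theorem \ref{main} gives $\wt{E}_\infty^{0,0}=\epsilon_3(n)n^3\Z=\ker(\wt{d}_7^{0,0})$, so $\opn{im}(\wt{d}_7^{0,0})$ is a subgroup of $\wt{E}_2^{7,-6}$ of order $N\cdot|H^7(X;\Z)|/(\epsilon_3(n)n^3)$. Running the same bookkeeping but using only the earlier upper bound $\opn{ind}(\alpha)\le\epsilon_2(n)\epsilon_3(n)n^3=2\epsilon_3(n)n^3$ from Theorem \ref{last} would make $\wt{d}_7^{0,0}$ surjective onto $\wt{E}_2^{7,-6}$; halving $\opn{ind}(\alpha)$ therefore doubles the index of the image, so the image has index exactly $2$ in $\wt{E}_2^{7,-6}$.

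Finally, to identify this index-$2$ subgroup literally with $2\wt{E}_2^{7,-6}$, I would invoke the $2$-local structure of $H^7(K(\Z/n,2);\Z)$ from \cite{Gu}: its $2$-primary part is cyclic, so admits a unique index-$2$ subgroup, namely the doubling. This last step is the main obstacle: the numerical content ``image has index $2$'' follows mechanically from Theorem \ref{main}, but upgrading it to the literal equality with $2\wt{E}_2^{7,-6}$ requires the explicit structural information on $H^7$ worked out in \cite{Gu}.
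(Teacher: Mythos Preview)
Your argument is correct and follows exactly the route the paper indicates: deduce the index of the image from Theorem~\ref{AH diff}, Theorem~\ref{main}, the surjectivity of $\wt{d}_3^{0,0}$ and $\wt{d}_5^{0,0}$ recorded just before the proposition, and the cyclicity of $H^7(K(\Z/n,2);\Z)\cong\Z/\epsilon_3(n)n$ from (\ref{K(Z/2,2)}). One small slip: the quantity $N\cdot|H^7(X;\Z)|/(\epsilon_3(n)n^3)$ you compute is the \emph{index} of the image, not its order---but you immediately recover the correct conclusion anyway.
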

\begin{proof}[Proof assuming Theorem \ref{main}]
This is easily deduced from the preceeding paragraph and Theorem \ref{main}, once we recall the cohomology of $K(\mathbb{Z}/n,2)$, as in Section 2. See Figure \ref{twisted AH}.
\end{proof}

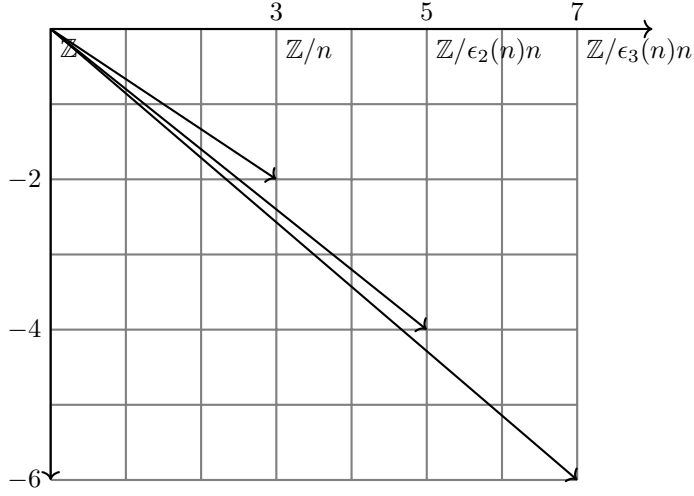
\begin{figure}[!]
\begin{tikzpicture}
\draw[step=1cm,gray, thick] (0,0) grid (7,-6);
\draw [ thick, <->] (0,-6)--(0,0)--(8,0);

\node [above] at (3,0) {$3$};
\node [above] at (5,0) {$5$};
\node [above] at (7,0) {$7$};
\node [left] at (0,-2) {$-2$};
\node [left] at (0,-4) {$-4$};
\node [left] at (0,-6) {$-6$};

\node [below right] at (0,0) {$\mathbb{Z}$};
\node [below right] at (3,0) {$\mathbb{Z}/n$};
\node [below right] at (5,0) {$\mathbb{Z}/\epsilon_{2}(n)n$};
\node [below right] at (7,0) {$\mathbb{Z}/\epsilon_{3}(n)n$};
\draw (0,0) [thick, ->] to (3,-2);
\draw (0,0) [thick, ->] to (5,-4);
\draw (0,0) [thick, ->] to (7,-6);
\end{tikzpicture}
\caption{The twisted Atiyah-Hirzebruch spectral sequence associated to $K(\mathbb{Z}/n,2)$ and $\beta_n$. }\label{twisted AH}
\end{figure}

The technical core of this paper is to solve a homotopy lifting problem of the same nature as the one displayed in (\ref{eq:lift}), with some modifications for technical convenience. We recall some more notations from \cite{Gu}. Let $m,n$ be integers. Then $\mathbb{Z}/n$ is a closed normal subgroup of $SU_{mn}$ in the sense of the following monomorphism of Lie groups:
\begin{equation*}
\mathbb{Z}/n\hookrightarrow SU_{mn}: t\mapsto e^{2\pi it/n}\mathbf{I}_{mn},
\end{equation*}
where $\mathbf{I}_r$ is the identity matrix of degree $r$. We denote the quotient group by $P(n,mn)$, and consider its classifying space $\mathbf{B}P(n,mn)$. It follows immediately from Bott's periodicity theorem that we have
\begin{equation}\label{Bott}
\pi_{i}(\mathbf{B}P(n,mn))\cong
\begin{cases}
\mathbb{Z}/n,\quad\textrm{if $i=2$},\\
\mathbb{Z},\quad\textrm{if $2<i<2mn+1$, and $i$ is even,}\\
0, \quad\textrm{if $0<i<2mn$, and $i$ is odd.}\\
\end{cases}
\end{equation}
In the context of this paper, the space $\mathbf{B}P(n,mn)$ may be considered as a refinement of $\mathbf{B}PU_{mn}$.  For a finite CW-complex $X$ and a topological Brauer class $\alpha$ of period $n$, there is an element $\alpha'$ in $H^2(X;\mathbb{Z}/n)$, which is sent to $\alpha$ by the Bockstein homomorphism. Consider the following lifting diagram
\begin{equation}\label{lift'}
\begin{tikzcd}
&\mathbf{B}P(n,mn)\arrow{d}\\
X\arrow[ur,dashed]\arrow{r}{\alpha'}&K(\mathbb{Z}/n,2)
\end{tikzcd}
\end{equation}
where the vertical arrow is the canonical map from $\mathbf{B}P(n,mn)$ to the bottom stage of its Postnikov tower.
We have the following
\begin{proposition}\label{lift to BP}[Proposition 4.3, \cite{Gu}]
Let $X$, $\alpha$ be as above, and suppose that $H^2(X;\mathbb{Z})=0$. Then $\alpha$ is classified by an Azumaya algebra of degree $mn$ if and only if the lifting in diagram (\ref{lift'}) exists.
\end{proposition}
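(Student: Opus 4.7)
The plan is to realize the map $\mathbf{B}P(n,mn)\to K(\mathbb{Z}/n,2)$ as one edge of a homotopy pullback square
\[
\begin{tikzcd}
\mathbf{B}P(n, mn) \arrow[r] \arrow[d] & \mathbf{B}PU_{mn} \arrow[d] \\
K(\mathbb{Z}/n, 2) \arrow[r, "i_*"] & K(\mathbb{Z}/mn, 2)
\end{tikzcd}
\]
and then to use it, together with naturality of the Bockstein, to translate between $P(n,mn)$-bundles on $X$ and Azumaya algebras of degree $mn$ with prescribed Brauer class. The square will come from the morphism of Lie group extensions from $1\to\mathbb{Z}/n\to SU_{mn}\to P(n,mn)\to 1$ to $1\to\mathbb{Z}/mn\to SU_{mn}\to PU_{mn}\to 1$ that is the identity on the middle $SU_{mn}$ and the inclusion $i\colon\mathbb{Z}/n\hookrightarrow\mathbb{Z}/mn$, $1\mapsto m$, on the left. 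Both vertical maps in the displayed square are then the bottom-stage Postnikov projections, with common homotopy fiber $\mathbf{B}SU_{mn}$ and the induced map on fibers the identity, which gives the homotopy pullback.

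For the ``if'' direction, a lift $\tilde f\colon X\to\mathbf{B}P(n,mn)$ of $\alpha'$, composed with $\mathbf{B}P(n,mn)\to\mathbf{B}PU_{mn}$, classifies an Azumaya algebra of degree $mn$. The commuting square identifies its associated class $\gamma\in H^2(X;\mathbb{Z}/mn)$ with $i_*(\alpha')$, and Bockstein naturality applied to the morphism of short exact sequences from $0\to\mathbb{Z}\xrightarrow{n}\mathbb{Z}\to\mathbb{Z}/n\to 0$ to $0\to\mathbb{Z}\xrightarrow{mn}\mathbb{Z}\to\mathbb{Z}/mn\to 0$ (with vertical maps $\opn{id}$, $m$, $i$) gives $\beta_{mn}\circ i_*=\beta_n$, so the Brauer class is $\beta_n(\alpha')=\alpha$, as required.

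Conversely, given an Azumaya algebra of degree $mn$ with Brauer class $\alpha$, classified by $g\colon X\to\mathbf{B}PU_{mn}$, the class $\gamma\in H^2(X;\mathbb{Z}/mn)$ induced by $X\to\mathbf{B}PU_{mn}\to K(\mathbb{Z}/mn,2)$ satisfies $\beta_{mn}(\gamma)=\alpha=\beta_{mn}(i_*(\alpha'))$. The hypothesis $H^2(X;\mathbb{Z})=0$ forces $\beta_{mn}$ to be injective on $H^2(X;\mathbb{Z}/mn)$ via the Bockstein exact sequence, hence $\gamma=i_*(\alpha')$ and the two compositions $X\to K(\mathbb{Z}/mn,2)$ agree up to homotopy. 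Choosing such a homotopy and applying the universal property of the homotopy pullback then produces the desired lift $\tilde f\colon X\to\mathbf{B}P(n,mn)$ with projection $\alpha'$ to $K(\mathbb{Z}/n,2)$. I expect the main obstacle to be confirming the homotopy pullback property of the opening square; thereafter the argument reduces cleanly to Bockstein naturality and the injectivity guaranteed by $H^2(X;\mathbb{Z})=0$.
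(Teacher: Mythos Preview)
Your argument is correct. The present paper does not supply its own proof of this proposition; it merely cites Proposition~4.3 of \cite{Gu}, so there is no in-paper argument to compare against directly. Your route via the homotopy pullback square
\[
\begin{tikzcd}
\mathbf{B}P(n, mn) \arrow[r] \arrow[d] & \mathbf{B}PU_{mn} \arrow[d] \\
K(\mathbb{Z}/n, 2) \arrow[r, "i_*"] & K(\mathbb{Z}/mn, 2)
\end{tikzcd}
\]
is the natural one and each step checks out: the map of short exact sequences $1\to\mathbb{Z}/n\to SU_{mn}\to P(n,mn)\to 1$ into $1\to\mathbb{Z}/mn\to SU_{mn}\to PU_{mn}\to 1$ deloops to a map of fiber sequences over $i_*\colon K(\mathbb{Z}/n,2)\to K(\mathbb{Z}/mn,2)$ with common fiber $\mathbf{B}SU_{mn}$ and identity on fibers, so the square is indeed a homotopy pullback (a map of fibrations that is an equivalence on fibers is always a homotopy pullback). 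The Bockstein naturality $\beta_{mn}\circ i_*=\beta_n$ is exactly as you say, and since $\mathbf{B}SU_{mn}$ is $3$-connected, the map $\mathbf{B}PU_{mn}\to K(\mathbb{Z}/mn,2)$ followed by $\beta_{mn}$ really does represent the generator of $H^3(\mathbf{B}PU_{mn};\mathbb{Z})$ used in diagram~(\ref{eq:lift}), so your ``if'' direction identifies the Brauer class correctly. For the converse, the hypothesis $H^2(X;\mathbb{Z})=0$ makes $\beta_{mn}$ injective on $H^2(X;\mathbb{Z}/mn)$ via the Bockstein long exact sequence, forcing $\gamma=i_*(\alpha')$ and hence yielding the lift via the pullback universal property.

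One small stylistic remark: you describe the pullback verification as the ``main obstacle,'' but in fact it is the most routine step once you have the map of Lie group extensions; the substantive content is precisely the injectivity of $\beta_{mn}$ coming from $H^2(X;\mathbb{Z})=0$, which you handle correctly.
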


In Section 2 we recall some facts on Eilenberg-Mac Lane spaces. Section 3 is a collection of lemmas on the cohomology of $\mathbf{B}P(n,mn)$. In Section 4 we consider a k-invariant in the Postnikov decomposition of $\mathbf{B}P(n,mn)$, for suitable $n$ and $m$, leading to the proof of Proposition \ref{mainprop} and Theorem \ref{main}.

The author is grateful to Benjamin Antieau for introducing the topological period-index problem to the author, and to Diarmuid Crowley and Christian Haesemeyer for helpful conversations on this work.

\section{recollection of facts on eilenberg - mac lane spaces}
All the facts recalled here are either well-known or easily deduced from \cite{Ca}.

The integral cohomology of $K(\mathbb{Z}/n,2)$ in degree $\leq 8$ is isomorphic to the following graded commutative ring:
\begin{equation}\label{K(Z/2,2)}
\mathbb{Z}[\beta_n,Q_n,R_n,\rho_n]/(n\beta_n,\epsilon_{2}(n)\beta_n^2, \epsilon_{2}(n)nQ_n,\epsilon_{3}(n)nR_n,\epsilon_3(n)\rho_n),
\end{equation}
where $\operatorname{deg}(\beta_n)=3, \operatorname{deg}(Q_n)=5, \operatorname{deg}(R_n)=7$, and $\operatorname{deg}(\rho_n)=8$. In other words, each $H^i(K(\Z/n,2);\Z)$ for $3,5,6,7$, is cyclic (and possibly trivial) generated by, respectively, $\beta_n, Q_n, \beta_n^2, R_n$, of order
$$n,\epsilon_{2}(n)n,\epsilon_{2}(n),\epsilon_{3}(n)n,$$
and for $i=8$, a direct sum of two (possibly trivial) cyclic groups generated by $\beta_nQ_n$ and $\rho_n$, of order $\epsilon_2(n)$ and $\epsilon_3(n)$, respectively.
(See (2.5) of \cite{Gu}.)

For $n\geq3$, the ring $H^{*}(K(\mathbb{Z},n);\mathbb{Z})$ in degree $\leq n+3$ is isomorphic to the following graded rings:
\begin{equation}\label{K(Z,n)}
\begin{cases}
\mathbb{Z}[\iota_n,\Gamma_n]/(2\Gamma_n),\ n>3,\textrm{ even},\\
\mathbb{Z}[\iota_n,\Gamma_n]/(2\iota_n^2,2\Gamma_n),\ n>3,\textrm{ odd},\\
\mathbb{Z}[\iota_3,\Gamma_3]/(2\Gamma_3, \Gamma_3-\iota_3^2),\ n=3,
\end{cases}
\end{equation}
where $\iota_n$, of degree $n$, is the so-called fundamental class, and $\Gamma_n$, of degree $n+3$, is a class of order $2$.(See (2.1) of \cite{Gu}.)

\begin{proposition}\label{stable Gamma}
The classes $\Gamma_n\in H^{n+3}(K(\mathbb{Z},n);\mathbb{Z})$ as above for all $n\geq 3$, stabilize to the same stable cohomology operation $\operatorname{Sq}_{\mathbb{Z}}^3\in H^3(H\mathbb{Z};\mathbb{Z})$ of order $2$, where $HR$ denotes the Eilenberg - Mac Lane spectrum associated to a unit ring $R$. Moreover, the mod $2$ reduction of $\operatorname{Sq}_{\mathbb{Z}}^3$ is the well-understood Steenrod square $\operatorname{Sq}^3$. In other words, the following diagram in the homotopy category of spectra commutes:
\begin{equation}\label{spectra}
\begin{tikzcd}
H\mathbb{Z}\arrow{d}\arrow{r}{\operatorname{Sq}_{\mathbb{Z}}^3}&\Sigma^3H\mathbb{Z}\arrow{d}\\
H\mathbb{Z}/2\arrow{r}{\operatorname{Sq}^3}&\Sigma^3H\mathbb{Z}/2
\end{tikzcd}
\end{equation}
where the vertical arrows are the obvious ones.
\end{proposition}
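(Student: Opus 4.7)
The plan is to identify each $\Gamma_n$ with the unique integral lift of the mod $2$ Steenrod class $\operatorname{Sq}^3 \iota_n$, and then read off stability, the order-$2$ property, and the mod $2$ reduction from the corresponding properties of $\operatorname{Sq}^3$. Since integral stable cohomology operations of degree $3$ correspond by Yoneda to classes in $H^3(K(\Z);\Z)$, this will simultaneously establish all three assertions.

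First, I would compute the mod $2$ reduction of $\Gamma_n$. By Cartan's description \cite{Ca} of $H^*(K(\Z,n);\Z/2)$ in terms of admissible Steenrod monomials $\operatorname{Sq}^I \iota_n$ whose rightmost entry is at least $2$, the unique such generator in degree $n+3$ (for $n\geq 4$) is $\operatorname{Sq}^3 \iota_n = \operatorname{Sq}^1 \operatorname{Sq}^2 \iota_n$; in the exceptional case $n=3$ this class coincides with $\iota_3^2$ via the Cartan formula. The Adem relation $\operatorname{Sq}^1 \operatorname{Sq}^1 = 0$ forces $\operatorname{Sq}^1(\operatorname{Sq}^3 \iota_n) = 0$, so the Bockstein long exact sequence
\begin{equation*}
H^{n+3}(K(\Z,n);\Z) \xrightarrow{\bmod 2} H^{n+3}(K(\Z,n);\Z/2) \xrightarrow{\operatorname{Sq}^1} H^{n+4}(K(\Z,n);\Z/2)
\end{equation*}
shows that $\operatorname{Sq}^3 \iota_n$ admits an integral lift of order $2$. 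Comparing with (\ref{K(Z,n)}), the only nonzero integral class of order $2$ in degree $n+3$ is $\Gamma_n$ (and for $n=3$ this is consistent with the explicit relation $\Gamma_3 = \iota_3^2$). Hence $\Gamma_n \bmod 2 = \operatorname{Sq}^3 \iota_n$.

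Second, I would establish stability. Stability of $\operatorname{Sq}^3$ as a mod $2$ operation means that under the adjoint counit $\sigma : \Sigma K(\Z,n) \to K(\Z,n+1)$ one has $\sigma^* \operatorname{Sq}^3 \iota_{n+1} = \Sigma \operatorname{Sq}^3 \iota_n$. Since both $\sigma^* \Gamma_{n+1}$ and $\Sigma \Gamma_n$ are integral classes of order $2$ with the same mod $2$ reduction, and since the uniqueness argument of the previous paragraph applies verbatim in this degree as well, we conclude $\sigma^* \Gamma_{n+1} = \Sigma \Gamma_n$. Passing to the colimit yields a well-defined stable operation $\operatorname{Sq}_{\Z}^3 \in H^3(K(\Z);\Z)$. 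It has order $2$ because each $\Gamma_n$ does, and its mod $2$ reduction is $\operatorname{Sq}^3$ by construction, which is precisely the commutativity of the diagram (\ref{spectra}).

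The main obstacle is the uniqueness step: pinning down $\Gamma_n \bmod 2 = \operatorname{Sq}^3\iota_n$ across all $n \geq 3$. For $n \geq 4$ this follows at once from Cartan's polynomial description; the degenerate case $n=3$ has to be treated separately using $\operatorname{Sq}^3 \iota_3 = \iota_3^2$ together with the explicit relation $\Gamma_3 = \iota_3^2$ in (\ref{K(Z,n)}). Once uniqueness is secured uniformly in $n$, the remaining stability and mod $2$ reduction statements are a formal consequence of the naturality of Bockstein and Steenrod operations.
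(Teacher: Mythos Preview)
The paper does not actually prove this proposition here; it simply cites Lemma~2.1 of the author's preceding paper \cite{Gu}. So there is no in-paper argument to compare against, and your proposal stands on its own as an independent proof.

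Your overall strategy is sound and standard: identify $\Gamma_n$ as the unique integral lift of $\operatorname{Sq}^3\bar{\iota}_n$, then inherit stability and the mod~$2$ reduction from the corresponding properties of $\operatorname{Sq}^3$. One imprecision is worth flagging. The displayed ``Bockstein long exact sequence'' you wrote,
\[
H^{n+3}(K(\Z,n);\Z)\xrightarrow{\bmod 2} H^{n+3}(K(\Z,n);\Z/2)\xrightarrow{\operatorname{Sq}^1} H^{n+4}(K(\Z,n);\Z/2),
\]
is not exact as stated: the connecting homomorphism in the long exact sequence for $0\to\Z\xrightarrow{2}\Z\to\Z/2\to 0$ lands in $H^{n+4}(-;\Z)$, and $\operatorname{Sq}^1$ is only its mod~$2$ reduction. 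Thus $\operatorname{Sq}^1(\operatorname{Sq}^3\bar{\iota}_n)=0$ does not by itself force the integral Bockstein to vanish. The cleanest repair is to bypass this entirely: since $H^{n+3}(K(\Z,n);\Z)\cong\Z/2$ by (\ref{K(Z,n)}), multiplication by $2$ on it is zero, so the mod~$2$ reduction map into $H^{n+3}(K(\Z,n);\Z/2)\cong\Z/2$ is injective, hence an isomorphism, and $\Gamma_n\mapsto\operatorname{Sq}^3\bar{\iota}_n$ is forced. (Equivalently, observe directly that $\beta_{\Z}(\operatorname{Sq}^2\bar{\iota}_n)$ is a nonzero $2$-torsion integral class whose mod~$2$ reduction is $\operatorname{Sq}^1\operatorname{Sq}^2\bar{\iota}_n=\operatorname{Sq}^3\bar{\iota}_n$, hence it equals $\Gamma_n$.) With this adjustment your argument goes through, and the stability step is correct as written since $H^{n+4}(\Sigma K(\Z,n);\Z)\cong\Z/2$ leaves only one nonzero candidate.
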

For a proof see Lemma 2.1 of \cite{Gu}.

The author is grateful to Professor Christian Haesemeyer and Professor Diarmuid Crowley for many helpful conversations in the course of this work.

\section{the group $H^7(\mathbf{B}P(n,mn);\mathbb{Z})$}
As in the introduction we denote by $P(n,mn)$ the quotient group of the following inclusion of Lie groups:
\begin{equation*}
\mathbb{Z}/n\hookrightarrow SU_{mn}: t\mapsto e^{2\pi it/n}\mathbf{I}_{mn},
\end{equation*}
where $\mathbf{I}_r$ is the identity matrix of degree $r$, as well as its classifying space $\mathbf{B}P(n,mn)$. In this section we consider the cohomology group $H^7(\mathbf{B}P(n,mn);\mathbb{Z})$.

As shown in \cite{Gu}, the significance of the space $\mathbf{B}P(n,mn)$ is its role in the study of a topological Brauer class $\alpha$ of period $n$. Such a class may be decomposed as a summation
\[\alpha=\sum_p\alpha_p\]
where $p$ runs over the prime divisors of $n$ and the period of $\alpha_p$ is a power of $p$. To prove Proposition \ref{mainprop}, we only consider the case $4|n$. On the other hand, Theorem 3 of \cite{An2} asserts $\opn{ind}(\alpha)=\prod_p\opn{ind}(\alpha_p)$, with each $\opn{ind}(\alpha_p)$ a power of $p$. Hence it suffices to determine $\opn{ind}(\alpha_2)$.

In other words, we have reduced the problem to the case $n=2^r$, with $r>1$. According to Theorem \ref{last}, for $\operatorname{per}(\alpha)=2^r$, the maximal value of $\operatorname{ind}(\alpha)$ is either $2^{2r}$ or $2^{2r+1}$. In the rest of this paper we show that the former is true, which is the content of Proposition \ref{mainprop}.

Therefore, we assume the following for the rest of this paper unless otherwise specified:
\begin{convention}\label{convention}
	\[n=2^r,\ m=2^{2r},\ r>1.\]
\end{convention}

Let us remind ourselves of the following notation: for a simply connected space $X$, let $X[k]$ denote the $k$th level of the Postnikov tower of $X$. Consider the Postnikov tower of $\mathbf{B}P(n,mn)$ for $\epsilon_2(n)n|m$, $n>1$:
\begin{equation}\label{Postnikov}
\begin{tikzcd}
                        &\mathbf{B}P(n,mn)[6]\arrow{d}{=}\\
K(\mathbb{Z},4)\arrow{r}&\mathbf{B}P(n,mn)[5]\simeq K(\mathbb{Z}/n,2)\times K(\mathbb{Z},4)\arrow{d}\arrow{r}{\kappa_5}&K(\mathbb{Z},7)\\
&K(\mathbb{Z}/n,2)\arrow{r}{\kappa_3=0}&K(\mathbb{Z},5)
\end{tikzcd}
\end{equation}
where $\kappa_3$ and $\kappa_5$ are the k-invariant of the space $\mathbf{B}P(n,mn)$. The equation $\kappa_3=0$ follows from Proposition 4.11 of \cite{Gu}. Consequently, $\mathbf{B}P(n,mn)[5]$ is a trivial fibration over $K(\Z/n,2)$ with fiber $K(\Z,4)$.

Consider the canonical map
\[\mathbf{B}P(n,mn)\rightarrow\mathbf{B}P(n,mn)[6]\simeq K(\mathbb{Z}/n,2)\times K(\mathbb{Z},4).\]
For future reference we take notes of the induced homomorphism between integral cohomology rings. For the sake of consistency, we take notations from \cite{Gu}, despite their annoying looking, as follows:
\begin{equation}\label{bookkeeping}
\beta_n\times 1\mapsto x_1',\quad R_n\times 1\mapsto R_n(x_1'),\quad 1\times\iota_4\mapsto e_2',\quad 1\times\Gamma_4\mapsto\operatorname{Sq}_{\mathbb{Z}}^3(e_2'),
\end{equation}
where $x_1'$ and $e_2'$ are the additive generators of $H^3(\mathbf{B}P(n,mn);\mathbb{Z})\cong\mathbb{Z}/n$ and $H^4(\mathbf{B}P(n,mn);\mathbb{Z})\cong\mathbb{Z}$, respectively. Here $R_n$ is the generator of $H^7(K(\Z/n,2);\Z)$ as in (\ref{K(Z/2,2)}), regarded as a cohomology operation in the obvious way.

By the construction of Postnikov towers we have
\begin{lemma}\label{H7}
We have
\[H^7(\mathbf{B}P(n,mn);\mathbb{Z})\cong H^7(K(\mathbb{Z}/n,2)\times K(\mathbb{Z},4);\Z)/(\kappa_5).\]
In particular, the group $H^7(\mathbf{B}P(n,mn);\mathbb{Z})$ is generated by $R_n(x_1')$, $x_1'e_2'$, and  $\operatorname{Sq}_{\mathbb{Z}}^3(e_2')$.
\end{lemma}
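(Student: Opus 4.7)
The plan is to realise $\mathbf{B}P(n,mn)$ as a Postnikov approximation in low degrees and analyse that approximation by a Serre spectral sequence.

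First, I would introduce the principal $K(\mathbb{Z},6)$-fibration
\[
K(\mathbb{Z},6) \to Y \to K(\mathbb{Z}/n,2) \times K(\mathbb{Z},4)
\]
classified by $\kappa_{5}$, so that $Y$ is the Postnikov stage of $\mathbf{B}P(n,mn)$ realising $\pi_{2},\pi_{4},\pi_{6}$ and killing higher homotopy. Since $\pi_{7}(\mathbf{B}P(n,mn)) = 0$ by (\ref{Bott}), the canonical map $\mathbf{B}P(n,mn) \to Y$ has 7-connected homotopy fibre, so by a standard Serre spectral sequence argument it induces an isomorphism on integral cohomology in degrees $\leq 7$. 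In particular $H^{7}(\mathbf{B}P(n,mn);\mathbb{Z}) \cong H^{7}(Y;\mathbb{Z})$.

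Second, I would compute $H^{7}(Y;\mathbb{Z})$ from the Serre spectral sequence of the displayed fibration. The integral cohomology of $K(\mathbb{Z},6)$ in the range $t \leq 7$ is concentrated in $t = 0$ and $t = 6$, and the base satisfies $H^{1}(K(\mathbb{Z}/n,2) \times K(\mathbb{Z},4);\mathbb{Z}) = 0$; hence the only $E_{2}$-entry of total degree $7$ is $E_{2}^{7,0} = H^{7}(K(\mathbb{Z}/n,2) \times K(\mathbb{Z},4);\mathbb{Z})$. The unique non-trivial differential touching this column is the transgression $d_{7}\colon E_{7}^{0,6} \to E_{7}^{7,0}$, which by construction of the fibration sends the fundamental class of the fibre to $\kappa_{5}$. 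Therefore $H^{7}(Y;\mathbb{Z}) \cong H^{7}(K(\mathbb{Z}/n,2) \times K(\mathbb{Z},4);\mathbb{Z}) / (\kappa_{5})$, which is the displayed isomorphism.

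Third, I would enumerate the generators using the K\"unneth decomposition of $H^{7}(K(\mathbb{Z}/n,2) \times K(\mathbb{Z},4);\mathbb{Z})$. By (\ref{K(Z/2,2)}) and (\ref{K(Z,n)}), the only bidegrees $(s,t)$ with $s+t=7$ that contribute are $(7,0)$, $(3,4)$ and $(0,7)$, giving the classes $R_{n}$, $\beta_{n}\otimes\iota_{4}$ and $\Gamma_{4}$ respectively; the other bidegrees vanish because $H^{*}(K(\mathbb{Z}/n,2);\mathbb{Z})$ is zero in degrees $1,2,4$ and $H^{*}(K(\mathbb{Z},4);\mathbb{Z})$ is zero in degrees $1,2,3,5,6$. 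Using Proposition \ref{stable Gamma} to identify $\Gamma_{4}$ with $\operatorname{Sq}_{\mathbb{Z}}^{3}\iota_{4}$ and applying the bookkeeping in (\ref{bookkeeping}) then translates these three generators to $R_{n}(x_{1}')$, $x_{1}'e_{2}'$ and $\operatorname{Sq}_{\mathbb{Z}}^{3}(e_{2}')$. The only non-formal point in the argument is the vanishing count at the heart of the Serre spectral sequence; once that is in hand, the lemma is a direct reading.
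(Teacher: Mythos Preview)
Your argument is correct and is precisely what the paper intends: its own proof is the single sentence ``By the construction of Postnikov towers we have,'' and you have carefully unpacked the Serre spectral sequence computation behind that assertion. One small remark: the paper later reserves the letter $Y$ for a different auxiliary space (the homotopy fibre of $e_2'\colon \mathbf{B}P(n,mn)\to K(\mathbb{Z},4)$), so you should rename your Postnikov stage to avoid a clash.
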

Consider the short exact sequence of Lie groups
\[1\rightarrow\mathbb{Z}/n\rightarrow SU_{mn}\rightarrow P(n,mn)\rightarrow 1,\]
from which arises a fiber sequence
\[\mathbf{B}SU_{mn}\rightarrow\mathbf{B}P(n,mn)\rightarrow K(\mathbb{Z}/n,2)\]
considered at the beginning of Section 6 of \cite{Gu}. We denote the associated cohomological Serre spectral sequence by $(^SE_{*}^{*,*},{^Sd}_{*}^{*,*})$.

For $k>2$, it is well-known that
\[H^*(\mathbf{B}SU_k;\Z)=\mathbb{Z}[c_2,\cdots,c_k],\]
where $c_i$ is the $i$th universal Chern class of degree $2i$.
\begin{lemma}\label{Sd_5}
The differential $^Sd_5^{0,4}=0$. In particular, $c_2\in{^SE}_{2}^{0,4}$ is a permanent cocycle.
\end{lemma}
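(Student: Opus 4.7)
The plan is to show that $c_2\in{^SE}_2^{0,4}$ lies in the image of the edge homomorphism $H^4(\mathbf{B}P(n,mn);\mathbb{Z})\to{^SE}_\infty^{0,4}\subseteq{^SE}_2^{0,4}$; any class in this image is automatically a permanent cocycle, which yields ${^Sd}_5^{0,4}(c_2)=0$. First I would check that no earlier differential can kill $c_2$: the potential targets ${^SE}_r^{r,5-r}$ of ${^Sd}_r^{0,4}$ for $r=2,3,4$ are subquotients of ${^SE}_2^{r,5-r}$, and these vanish because $H^q(\mathbf{B}SU_{mn};\mathbb{Z})=0$ for $q=1,2,3$. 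Hence $c_2$ already survives to ${^SE}_5^{0,4}$, and the only remaining possibility is that it is hit by ${^Sd}_5^{0,4}$ nontrivially.

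The main step is to identify the restriction of $e_2'\in H^4(\mathbf{B}P(n,mn);\mathbb{Z})$ from (\ref{bookkeeping}) to the fiber. By construction, $e_2'$ is pulled back from $\iota_4$ along the composition
\[ p\colon \mathbf{B}P(n,mn)\to\mathbf{B}P(n,mn)[6]\simeq K(\mathbb{Z}/n,2)\times K(\mathbb{Z},4)\twoheadrightarrow K(\mathbb{Z},4). \]
Let $q\colon SU_{mn}\to P(n,mn)$ denote the quotient and $\mathbf{B}q\colon\mathbf{B}SU_{mn}\to\mathbf{B}P(n,mn)$ the fiber inclusion. Because the kernel $\mathbb{Z}/n$ is discrete, $\mathbf{B}q$ induces an isomorphism on $\pi_i$ for $i\geq 3$; combined with the fact that the Postnikov projection is an isomorphism on $\pi_4$, the composite $p\circ\mathbf{B}q\colon\mathbf{B}SU_{mn}\to K(\mathbb{Z},4)$ is an isomorphism on $\pi_4\cong\mathbb{Z}$. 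Since $\mathbf{B}SU_{mn}$ is $3$-connected, the Hurewicz theorem identifies $H^4(\mathbf{B}SU_{mn};\mathbb{Z})$ with $\opn{Hom}(\pi_4(\mathbf{B}SU_{mn}),\mathbb{Z})$, so $(p\circ\mathbf{B}q)^*\iota_4=\pm c_2$.

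Consequently $(\mathbf{B}q)^*e_2'=\pm c_2$ lies in the image of the edge map, so $c_2\in{^SE}_\infty^{0,4}$, and in particular ${^Sd}_5^{0,4}(c_2)=0$, as required. The argument is quite direct and I do not anticipate a serious obstacle; the only point requiring mild care is the Hurewicz identification of $(p\circ\mathbf{B}q)^*\iota_4$ with $\pm c_2$, which relies on the $3$-connectedness of $\mathbf{B}SU_{mn}$ and the standard description of $c_2$ as the generator of $H^4(\mathbf{B}SU_{mn};\mathbb{Z})$ dual to the canonical generator of $\pi_4$.
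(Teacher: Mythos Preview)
Your argument is correct, but it takes a different route from the paper. The paper argues on the \emph{target} side: from the Postnikov decomposition (\ref{Postnikov}) one reads off $H^5(\mathbf{B}P(n,mn);\mathbb{Z})\cong H^5(K(\mathbb{Z}/n,2);\mathbb{Z})\cong\mathbb{Z}/\epsilon_2(n)n$; since ${^SE}_2^{5,0}$ already has this order and is the only nonzero entry on the $5$-antidiagonal, it must survive untouched to ${^SE}_\infty^{5,0}$, forcing ${^Sd}_5^{0,4}=0$. You instead argue on the \emph{source} side, exhibiting $c_2$ explicitly as the restriction of $e_2'$ to the fiber via the Hurewicz identification, so that $c_2$ lies in the image of the edge map and is a permanent cocycle. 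Both proofs lean on the same Postnikov splitting, but the paper extracts $H^5$ of the total space while you extract the $\pi_4$-isomorphism. Your version has the mild bonus of actually naming the lift of $c_2$ (namely $\pm e_2'$), which is consistent with how $e_2'$ is used later; the paper's version is a one-liner given what has already been set up.
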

\begin{proof}
Diagram (\ref{Postnikov}) implies
\[H^5(\mathbf{B}P(n,mn);\mathbb{Z})\cong H^5(K(\mathbb{Z}/n,2);\mathbb{Z})\cong\mathbb{Z}/2n.\]
Hence we have $^Sd_5^{0,4}=0$. There is no other non-trivial differential out of $^SE_{*}^{0,4}$ for obvious degree reasons, so $c_2$ is a permanent cocycle.
\end{proof}

\begin{lemma}[Lemma 6.1, \cite{Gu}]\label{SE}
Recall that $H^{3}(K(\mathbb{Z}/n,2);\mathbb{Z}))\cong\mathbb{Z}/n$ is generated by an element $\beta_n$, and that $H^{7}(K(\mathbb{Z}/n,2);\mathbb{Z}))\cong\mathbb{Z}/n$ is generated by $R_n$. In the spectral sequence ${^{S}E}_{*}^{*,*}$, we have ${^{S}d}_{3}^{0,6}(c_3)=2c_{2}\beta_{n}$ with kernel generated by
\[\frac{n}{2}c_3,\]
and
\[{^{S}d}_{7}^{0,6}(\frac{n}{2}c_3)=nR_n.\]
All the other differentials out of ${^{S}E}_{*}^{0,6}$ are trivial.

\end{lemma}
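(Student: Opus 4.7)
The plan is to analyze the Serre spectral sequence of the fibration $\mathbf{B}SU_{mn}\to\mathbf{B}P(n,mn)\to K(\mathbb{Z}/n,2)$ along the column $s=0$. Since $H^*(\mathbf{B}SU_{mn};\mathbb{Z})=\mathbb{Z}[c_2,\ldots,c_{mn}]$ is concentrated in even degrees, the entries $^SE_r^{r,7-r}$ for $r=4,5,6$ all vanish (their fiber factors are $H^3$, $H^2$, $H^1$ of $\mathbf{B}SU_{mn}$, all zero). Hence in the range we care about only $^Sd_3^{0,6}$ and $^Sd_7^{0,6}$ can possibly be non-zero, and the content of the lemma is to pin down both.

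The first step, $^Sd_3^{0,6}(c_3)=2c_2\beta_n$, I would establish by naturality. The surjection $P(n,mn)\twoheadrightarrow PU_{mn}$ induces a map from our fibration to $\mathbf{B}SU_{mn}\to\mathbf{B}PU_{mn}\to K(\mathbb{Z}/mn,2)$ which is the identity on the fiber and covers the map $K(\mathbb{Z}/n,2)\to K(\mathbb{Z}/mn,2)$ classified by the inclusion $\mathbb{Z}/n\hookrightarrow\mathbb{Z}/mn$ sending $1$ to $m$; under this, $\beta_{mn}$ pulls back to $\beta_n$. The transgression $^Sd_3^{0,6}(c_3)$ in the $\mathbf{B}PU_{mn}$-fibration is a classical computation yielding $2c_2\beta_{mn}$, and pulling back gives the claim. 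The kernel description $\frac{n}{\epsilon_2(n)}c_3$ is then immediate from $^SE_2^{3,4}\cong(\mathbb{Z}/n)c_2\beta_n$: the element $kc_3$ lies in the kernel of $^Sd_3^{0,6}$ iff $2k\equiv 0\pmod n$.

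The main obstacle is computing $^Sd_7^{0,6}\bigl(\frac{n}{\epsilon_2(n)}c_3\bigr)\in{^SE}_7^{7,0}$, a subquotient of $(\mathbb{Z}/\epsilon_3(n)n)R_n$. The coefficient $\epsilon_3(n)\epsilon_3(m/n)/\epsilon_3(mn)$ equals $1$ unless both $3\mid n$ and $3\mid m/n$, in which case it equals $3$, indicating a $3$-local subtlety that depends on the $3$-adic valuations of $m$ and $n$. I would proceed by localization: away from $3$, the prefactor is just $n$ and can be read off from order considerations combined with naturality against $\mathbf{B}PU_{mn}$ and the low-dimensional cohomology of $\mathbf{B}P(n,mn)$ recorded in the Postnikov diagram (\ref{Postnikov}) and the identifications (\ref{bookkeeping}). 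At the prime $3$, the extra factor of $3$ in the non-trivial case must come from identifying the higher transgression with a specific integral stable operation related to $\operatorname{Sq}_{\mathbb{Z}}^3$ (Proposition \ref{stable Gamma}) and the Steenrod reduced power $P^1$ at the prime $3$, acting on the generator of the $H^4$-factor of $\mathbf{B}P(n,mn)[5]$. Careful bookkeeping of the contribution of the $k$-invariant $\kappa_5$ appearing in (\ref{Postnikov}) should then pin down the precise integer.
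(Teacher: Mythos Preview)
This paper does not contain a proof of the lemma: it is quoted verbatim as Lemma~6.1 of \cite{Gu} and used as input. So there is no ``paper's own proof'' to compare your proposal against; anything you write is necessarily a reconstruction of the argument from the earlier paper.

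On the substance of your proposal: your treatment of ${^Sd}_3^{0,6}$ is sound. The naturality comparison with the fibration $\mathbf{B}SU_{mn}\to\mathbf{B}PU_{mn}\to K(\mathbb{Z}/mn,2)$ is exactly the right idea, and your verification that $\beta_{mn}$ pulls back to $\beta_n$ is correct. The kernel computation then follows as you say.

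The ${^Sd}_7^{0,6}$ part, however, is not a proof but a hope. Two concrete problems. First, the element $\frac{n}{\epsilon_2(n)}c_3$ is not transgressive in the usual sense: $c_3$ itself already supports a nontrivial $d_3$, so the class you are tracking at $E_7$ is only a residue, and the slogan ``transgressions commute with stable operations'' does not apply directly. You would need a more careful argument, for instance via a secondary operation or a direct computation at the prime $3$. Second, and more seriously within the logic of this paper, you propose to use the $k$-invariant $\kappa_5$ from (\ref{Postnikov}) to pin down the $3$-local part. But look at the flow of the paper: Lemma~\ref{SE} is used to prove Lemma~\ref{cardH^7BP}, which in turn is the main constraint used throughout Section~4 to determine $\kappa_5$. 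Invoking $\kappa_5$ here would make the argument circular.

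If you want to supply an independent proof, the honest route is to redo the relevant portion of \cite{Gu}: compute $H^7(\mathbf{B}P(n,mn);\mathbb{Z})$ by other means (for instance via comparison with $\mathbf{B}PU_{mn}$ and $\mathbf{B}U_{mn}$, where the Chern class structure is explicit), and then read off the $d_7$ from the resulting order of ${^SE}_\infty^{7,0}$. The $3$-local coefficient is determined once you know the precise order of the image of $R_n$ in $H^7(\mathbf{B}P(n,mn);\mathbb{Z})$.
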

See Figure \ref{^S E figure} for the differentials mentioned in the lemmas above.
\begin{figure}[!]
\begin{tikzpicture}
\draw[step=0.5cm,gray, thick];
\draw [ thick, <->] (0,6)--(0,0)--(7,0);

\node [below] at (3,0) {$3$};
\node [below] at (5,0) {$5$};
\node [below] at (7,0) {$7$};
\node [left] at (0,4) {$4$};
\node [left] at (0,6) {$6$};
\node [above right] at (0,4) {$\mathbb{Z}$};
\node [above right] at (0,6) {$\mathbb{Z}$};
\node [above right] at (3,0) {$\mathbb{Z}/n$};
\node [above right] at (5,0) {$\mathbb{Z}/\epsilon_{2}(n)n$};
\node [above right] at (7,0) {$\mathbb{Z}/\epsilon_{3}(n)n$};
\draw (0,4) [dashed, ->] to (5,0);
\draw (0,6) [thick, ->] to node [above] {$\times 2$}(3,4);
\draw (0,6) [dashed, ->] to (7,0);
\end{tikzpicture}
\caption{Low dimensional differentials of the spectral sequence $^{S}E_{*}^{*,*}$, when $\epsilon_3(n)n|m$, $n>1$. The dashed arrows represent trivial differentials.}\label{^S E figure}
\end{figure}
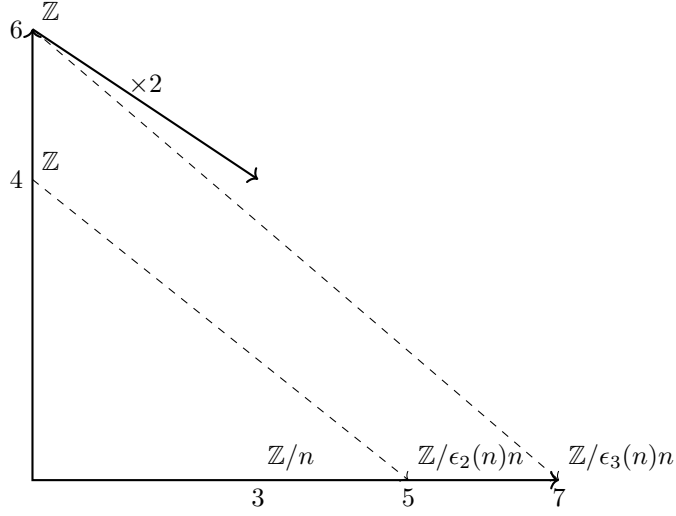

\begin{lemma}\label{cardH^7BP}
\begin{enumerate}
\item
The element $\operatorname{Sq}^3_{\mathbb{Z}}(e_2')$ is a linear combination of $x_1'e_2'$ and $R_n(x_1')$.
\item
The element $R_n(x_1')$ is of order $n$.
\item
The order of the group $H^7(\mathbf{B}P(n,mn);\mathbb{Z})$ is $2n$.
\end{enumerate}
\end{lemma}
\begin{proof}
As indicated in Figure \ref{^S E figure}, we have the exact sequence
\[0\rightarrow {^SE}_{\infty}^{7,0}\rightarrow H^7(\mathbf{B}P(n,mn);\mathbb{Z}) \rightarrow {^SE}_{\infty}^{3,4}\rightarrow 0,\]
where ${^SE}_{\infty}^{7,0}$ and ${^SE}_{\infty}^{3,4}$ are generated by $R_n(x_1')$ and $x_1'e_2'$, respectively. Hence (1) follows. The statement (2) is an immediate a consequence of Lemma \ref{SE}. To verify the statement (3), it suffices to check the orders of $^SE_{\infty}^{3,4}$ and $^SE_{\infty}^{7,0}$, which also follows from Lemma \ref{SE}.
\end{proof}


\section{the k-invariant $\kappa_5$}
Consider the space $\mathbf{B}P(n,mn)[6]$, the $6$th level of the Postnikov tower of $\mathbf{B}P(n,mn)$. Keep in mind that Convention \ref{convention} is still assumed.  In particular we have $2n|m$, in which case we have the homotopy equivalence (Proposition 4.11, \cite{Gu})
\begin{equation}\label{eq:directprod}
\mathbf{B}P(n,mn)[6]=\mathbf{B}P(n,mn)[5]\simeq K(\mathbb{Z}/n,2)\times K(\mathbb{Z},4).
\end{equation}

The goal of this section is to prove Proposition \ref{mainprop}, therefore Theorem \ref{main}, by determining the k-invariant
\[\kappa_5\in H^7(\mathbf{B}P(n,mn)[5];\Z)\cong H^7(K(\mathbb{Z}/n,2)\times K(\mathbb{Z},4);\Z).\]

The equations (\ref{K(Z/2,2)}) and (\ref{K(Z,n)}) together with the K{\"u}nneth formula give us
\begin{equation}\label{H^7(K*K)}
\begin{split}
&H^7(K(\mathbb{Z}/n,2)\times K(\mathbb{Z},4);\mathbb{Z})\\=&(R_n\times 1)\oplus (\beta_n\times\iota_4)\oplus (1\times\Gamma_4)\\
\cong&\mathbb{Z}/n\oplus\mathbb{Z}/n\oplus\mathbb{Z}/2,
\end{split}
\end{equation}
where $R_n\times 1$, $\beta_n\times\iota_4$ and $1\times\Gamma_4$ generate the three summands, respectively.

When $n$ is even, it follows from Theorem 6.8 of \cite{Gu} that up to an invertible coefficient, we have
\begin{equation}\label{k_51}
\kappa_5\equiv\lambda_1 R_n\times 1+ \lambda_2\beta_n\times\iota_4+1\times\Gamma_4\mod _2H^7(\mathbf{B}P(n,mn);\Z),
\end{equation}
an element in $H^7(K(\mathbb{Z}/n,2)\times K(\mathbb{Z},4);\mathbb{Z})$, where we have $\lambda_1=m\lambda/(2n)$ for some $\lambda$ invertible in $\mathbb{Z}/n$, $\lambda_2\in\Z/n$, and $_2H^7(\mathbf{B}P(n,mn);\Z)$ denotes the subgroup of $2$-torsions of $H^7(\mathbf{B}P(n,mn);\Z)$.


To narrow down the choices of $\kappa_5$, we have the following
\begin{lemma}\label{Gamma}
In $H^7(K(\mathbb{Z}/n,2)\times K(\mathbb{Z},4);\mathbb{Z})$ we have
\[\kappa_5\equiv 1\times\Gamma_4\mod (R_n\times1, \beta_n\times\iota_4).\]
\end{lemma}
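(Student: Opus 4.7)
The plan is to identify the coefficient of $1\times\Gamma_4$ in $\kappa_5$ by pulling back along the section
\[\sigma\colon K(\mathbb{Z},4)\hookrightarrow K(\mathbb{Z}/n,2)\times K(\mathbb{Z},4)\simeq\mathbf{B}P(n,mn)[5],\quad y\mapsto (*,y).\]
By (\ref{H^7(K*K)}), the map $\sigma^*$ kills $R_n\times 1$ and $\beta_n\times\iota_4$, while $\sigma^*(1\times\Gamma_4)=\Gamma_4$. Hence $\sigma^*(\kappa_5)=c\cdot\Gamma_4$, where $c\in\mathbb{Z}/2$ is precisely the coefficient of $1\times\Gamma_4$ in $\kappa_5$. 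The task reduces to showing $\sigma^*(\kappa_5)\neq 0$.

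Next I would invoke naturality of the Postnikov $k$-invariant under the map $\mathbf{B}SU_{mn}\to\mathbf{B}P(n,mn)$ coming from the quotient $SU_{mn}\to P(n,mn)$. Since $\pi_i(\mathbf{B}SU_{mn})=0$ for $i\in\{2,3,5\}$ while the map $\pi_4(\mathbf{B}SU_{mn})\to\pi_4(\mathbf{B}P(n,mn))$ is an isomorphism of $\mathbb{Z}$, the induced map on $5$-Postnikov stages
\[K(\mathbb{Z},4)=\mathbf{B}SU_{mn}[5]\longrightarrow\mathbf{B}P(n,mn)[5]\simeq K(\mathbb{Z}/n,2)\times K(\mathbb{Z},4)\]
may be identified with $\sigma$ after choosing the equivalence (\ref{eq:directprod}). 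Naturality of $k$-invariants then identifies $\sigma^*(\kappa_5)$ with the $k$-invariant $\kappa_{SU}\in H^7(K(\mathbb{Z},4);\mathbb{Z})\cong\mathbb{Z}/2$ of the Postnikov extension $\mathbf{B}SU_{mn}[6]\to K(\mathbb{Z},4)$.

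The final step is to show $\kappa_{SU}=\Gamma_4$, which I would do by running the $\mathbb{F}_2$-Serre spectral sequence of the fibration $K(\mathbb{Z},6)\to\mathbf{B}SU_{mn}[6]\to K(\mathbb{Z},4)$. Since $mn\geq 3$, we have $H^6(\mathbf{B}SU_{mn};\mathbb{F}_2)\cong\mathbb{F}_2$, generated by the mod $2$ reduction of $c_3$, and by Postnikov approximation $H^6(\mathbf{B}SU_{mn}[6];\mathbb{F}_2)$ is also one-dimensional. The total degree $6$ contributions on the $E_2$-page come only from $E_2^{6,0}=\mathbb{F}_2\langle\operatorname{Sq}^2\iota_4\rangle$ and $E_2^{0,6}=\mathbb{F}_2\langle\iota_6\rangle$, so exactly one must die. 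A brief inspection shows $E_{\infty}^{6,0}$ survives (no differentials in or out land in the relevant range), so $\iota_6$ must transgress nontrivially, and the only possible target is $\operatorname{Sq}^3\iota_4\in E_7^{7,0}=H^7(K(\mathbb{Z},4);\mathbb{F}_2)$. Thus the mod $2$ reduction of $\kappa_{SU}$ is nonzero, forcing $\kappa_{SU}=\Gamma_4$.

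The main obstacle is the naturality step: one has to verify that under the specific equivalence (\ref{eq:directprod}) the induced map on $5$-stages really coincides with $\sigma$, and that Postnikov $k$-invariants are natural with respect to this map. Both are standard but require a careful comparison of Postnikov towers via the long exact sequence on homotopy groups of the fibration $\mathbf{B}SU_{mn}\to\mathbf{B}P(n,mn)\to K(\mathbb{Z}/n,2)$ studied in Section 3.
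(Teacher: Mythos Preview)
Your argument is correct and takes a genuinely different route from the paper. The paper proves Lemma~\ref{Gamma} by contradiction: if the coefficient of $1\times\Gamma_4$ in $\kappa_5$ vanished, then $H^7(\mathbf{B}P(n,mn);\mathbb{Z})\cong H^7(K(\mathbb{Z}/n,2)\times K(\mathbb{Z},4);\mathbb{Z})/(\kappa_5)$ would split off the summand $(1\times\Gamma_4)$, so $\operatorname{Sq}_{\mathbb{Z}}^3(e_2')$ could not lie in the subgroup generated by $x_1'e_2'$ and $R_n(x_1')$; this contradicts part~(1) of Lemma~\ref{cardH^7BP}, which was extracted from the Serre spectral sequence of $\mathbf{B}SU_{mn}\to\mathbf{B}P(n,mn)\to K(\mathbb{Z}/n,2)$. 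Your approach instead isolates the coefficient directly by pulling back along the inclusion $\sigma$ of the $K(\mathbb{Z},4)$ factor, identifying $\sigma$ with the map on $5$th Postnikov stages induced by $\mathbf{B}SU_{mn}\to\mathbf{B}P(n,mn)$, and then computing the $k$-invariant of $\mathbf{B}SU_{mn}$ itself.

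Both proofs ultimately draw on the same source of information, the cohomology of $\mathbf{B}SU_{mn}$, but package it differently. The paper's argument is shorter at this point because the relevant spectral-sequence input has already been recorded in Lemma~\ref{cardH^7BP}. Your argument is more self-contained and conceptually transparent: it names the coefficient of $1\times\Gamma_4$ as the $k$-invariant $\kappa_{SU}\in H^7(K(\mathbb{Z},4);\mathbb{Z})\cong\mathbb{Z}/2$ of $\mathbf{B}SU_{mn}$, a fact of independent interest. The ``main obstacle'' you flag is harmless here: since $H^2(K(\mathbb{Z},4);\mathbb{Z}/n)=0$ and $H^4(K(\mathbb{Z}/n,2);\mathbb{Z})=0$, any two splittings of $\mathbf{B}P(n,mn)[5]$ differ by a diagonal automorphism, so the induced map $\mathbf{B}SU_{mn}[5]\to\mathbf{B}P(n,mn)[5]$ agrees with $\sigma$ up to $\pm 1$ on the $K(\mathbb{Z},4)$ factor, which does not affect the $2$-torsion class $\Gamma_4$; and naturality of $k$-invariants applies because $\pi_6(\mathbf{B}SU_{mn})\to\pi_6(\mathbf{B}P(n,mn))$ is an isomorphism.
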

\begin{proof}
Assume that the lemma is false. Since $1\times\Gamma_4$ is of order $2$, we have
\[\kappa_5\in (R_n\times1, \beta_n\times\iota_4).\]
Therefore, we have
\begin{equation*}
\begin{split}
&H^7(\mathbf{B}P(n,mn);\mathbb{Z})\\
\cong&H^7(K(\mathbb{Z}/n,2)\times K(\mathbb{Z},4);\mathbb{Z})/(\kappa_5)\\
\cong& (R_n\times 1)\oplus (\beta_n\times\iota_4)\oplus (1\times\Gamma_4)/(\kappa_5)\\
\cong& [(R_n\times 1)\oplus (\beta_n\times\iota_4)]/(\kappa_5)\oplus  (1\times\Gamma_4),
\end{split}
\end{equation*}
but this violates (1) of Lemma \ref{cardH^7BP}, which asserts that $H^7(\mathbf{B}P(n,mn);\mathbb{Z})$ is generated by
$R_n(x_1')$ and $\opn{Sq}^3_{\Z}(e_2')$.
\end{proof}





\begin{lemma}\label{lambda_2not2}
In $H^7(K(\Z/n,2)\times K(\Z,4);\Z)$, We have
\[\kappa_5\equiv\lambda_2\beta_n\times\iota_4\mod (R_n\times1, 1\times\Gamma_4),\]
where the coefficient $\lambda_2$ is invertible in $\mathbb{Z}/n$.
\end{lemma}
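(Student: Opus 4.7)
The plan is to pin down $\lambda_2$ by matching the order of $\kappa_5$ against the cardinality calculation in Lemma \ref{cardH^7BP}. Under Convention \ref{convention} one has $\epsilon_2(n) = 2$ and $\epsilon_3(n) = \epsilon_3(m/n) = \epsilon_3(mn) = 1$, so Lemma \ref{cardH^7BP}(3) yields $|H^7(\mathbf{B}P(n,mn);\mathbb{Z})| = 2n$. Combined with Lemma \ref{H7} and the K\"unneth decomposition $H^7(K(\mathbb{Z}/n,2)\times K(\mathbb{Z},4);\mathbb{Z}) \cong \mathbb{Z}/n \oplus \mathbb{Z}/n \oplus \mathbb{Z}/2$ of total order $2n^2$, this forces the cyclic subgroup generated by $\kappa_5$ to have order exactly $n$.

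Next, write $\kappa_5 = a(R_n\times 1) + b(\beta_n\times\iota_4) + c(1\times\Gamma_4)$ with $a,b\in\mathbb{Z}/n$ and $c\in\mathbb{Z}/2$. By Lemma \ref{Gamma}, $c = 1$. The following step is to locate $a$ using formula (\ref{k_51}), reading ``$\mod 2$-torsions'' as a congruence modulo the $2$-torsion subgroup $\{0, n/2\}$ of $\mathbb{Z}/\epsilon_3(n)n = \mathbb{Z}/n$. For $m = 2^{2r}$ the leading coefficient $\frac{\epsilon_3(n)m}{\epsilon_2(m)\epsilon_3(m)n}\lambda$ equals $2^{r-1}\lambda$, which lies in $\{0, n/2\}$ because $\lambda$ is odd; for $m = 2^{2r+1}$ the leading coefficient equals $2^r\lambda \equiv 0 \pmod n$. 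In either case $a$ itself lies in $\{0, n/2\}$, so $a$ is a non-unit with $\operatorname{ord}(a)\in\{1,2\}$, and in particular $\operatorname{ord}(a) \mid n/2$.

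Finally, $\operatorname{ord}(\kappa_5) = \operatorname{lcm}(\operatorname{ord}(a), \operatorname{ord}(b), 2)$, and since $\operatorname{ord}(a)$ and $2$ each divide $n/2$ while this lcm must equal $n$, the only possibility is $\operatorname{ord}(b) = n$, that is $b = \lambda_2$ is a unit in $\mathbb{Z}/n$. The single delicate point in this argument is the correct interpretation of ``$\mod 2$-torsions'' in (\ref{k_51}) and the explicit verification that the coefficient of $R_n\times 1$ lands in the $2$-torsion subgroup in both subcases of Convention \ref{convention}; once this is in hand, the conclusion is a pure counting argument combining Lemmas \ref{H7}, \ref{cardH^7BP}(3), \ref{Gamma}, and formula (\ref{k_51}).
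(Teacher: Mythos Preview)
Your proof is correct and follows essentially the same approach as the paper's: both arguments compare the order of the cyclic subgroup generated by $\kappa_5$ against the cardinality of $H^7(\mathbf{B}P(n,mn);\mathbb{Z})$ furnished by Lemma~\ref{cardH^7BP}. The paper phrases this as a contradiction (if $\lambda_2$ were non-invertible then $\operatorname{ord}(\kappa_5) < 2^r$, making the quotient too large), while you give the direct contrapositive (the quotient has order $2n$, so $\operatorname{ord}(\kappa_5)=n$, forcing the $\beta_n\times\iota_4$-coefficient to be a unit); your explicit split into the two subcases $m=2^{2r}$ and $m=2^{2r+1}$ and the appeal to Lemma~\ref{Gamma} for $c=1$ are harmless but not strictly needed, since $\operatorname{ord}(a),\operatorname{ord}(c)\le 2\mid n/2$ regardless.
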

\begin{proof}
We argue by contradiction. Suppose that $\lambda_2$ is not invertible in $\mathbb{Z}/n$.
Notice that, for our choice of $m$ and $n$, equation (\ref{k_51}) implies
\[\kappa_5\equiv 2^{r-1}\lambda R_n\times 1+ \lambda_2\beta_n\times\iota_4+1\times\Gamma_4\mod _2H^7(\mathbf{B}P(n,mn);\Z).\]
Since $\lambda_2$ is not invertible, $\kappa_5$ has order less than $2^r$. On the other hand, it follows from (\ref{H^7(K*K)}) that the group $H^7(K(\mathbb{Z}/n,2)\times K(\mathbb{Z},4);\mathbb{Z})$ has order $2^{2r+1}$. Therefore the group
\[H^7(\mathbf{B}P(n,mn);\mathbb{Z})\cong H^7(K(\mathbb{Z}/n,2)\times K(\mathbb{Z},4);\mathbb{Z})/(\kappa_5)\]
has order greater than $2^{r+1}$, violating Lemma \ref{cardH^7BP}.
\end{proof}

Notice that $\kappa_5$ is determined by the Postnikov tower merely up to multiplication by an invertible coefficient. By the choice we have made of $n,m$, we are free to multiply $\kappa_5$ by any odd integer. Hence, we are enabled by Lemma \ref{lambda_2not2} to normalize (\ref{k_51}) by fixing $\lambda_2=1$:
\begin{equation*}
\kappa_5\equiv2^{r-1}\lambda R_n\times 1+\beta_n\times\iota_4+1\times\Gamma_4\mod _2H^7(\mathbf{B}P(n,mn);\Z)\cap (R_n\times 1, 1\times\Gamma_4).
\end{equation*}
where $\lambda$ is odd. However, since $R_n$ is of order $2^r$, and $2^{r-1}\equiv 2^{r-1}\lambda\mod 2^{r}$ for all odd integer $\lambda$, the preceding equation becomes
\begin{equation}\label{lambdainv}
\kappa_5=2^{r-1}R_n\times1+\beta_n\times\iota_4+1\times\Gamma_4\mod _2H^7(\mathbf{B}P(n,mn);\Z)\cap (R_n\times 1, 1\times\Gamma_4).
\end{equation}

Combining (\ref{lambdainv}) with Lemma \ref{Gamma}, we have
\begin{equation}\label{k_5}
\kappa_5\equiv2^{r-1}R_n\times 1+\beta_n\times\iota_4+1\times\Gamma_4\mod _2H^7(\mathbf{B}P(n,mn);\Z)\cap (R_n\times 1)
\end{equation}
Moreover, we have the following
\begin{lemma}\label{statement}
The abelian group $H^7(\mathbf{B}P(n,mn);\mathbb{Z})$ is additively generated by $R_n(x_1')$, $x_1'e_2'$ and $\operatorname{Sq}_{\mathbb{Z}}^3(e_2')$, modulo the relation
\begin{equation}\label{k_5'}
\mu R_n(x_1')+x_1'e_2'+\operatorname{Sq}_{\mathbb{Z}}^3(e_2')=0,
\end{equation}
where $\mu$ is either $0$ or $2^{r-1}$. Moreover, only one of the two possible relations holds.
\end{lemma}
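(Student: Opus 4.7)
The plan is to extract Lemma \ref{statement} directly from the identification of $H^7(\mathbf{B}P(n,mn);\mathbb{Z})$ provided by Lemma \ref{H7}, combined with the precise form of $\kappa_5$ derived in (\ref{k_5}), and then to invoke Lemma \ref{cardH^7BP}(2) to rule out both candidate relations holding simultaneously.

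First, by Lemma \ref{H7} and the K{\"u}nneth decomposition (\ref{H^7(K*K)}), the group $H^7(\mathbf{B}P(n,mn);\mathbb{Z})$ is the quotient of the abelian group generated by $R_n\times 1$, $\beta_n\times\iota_4$, $1\times\Gamma_4$ (with their respective orders) by the subgroup generated by the image of $\kappa_5$. Under the bookkeeping map (\ref{bookkeeping}), these three generators are sent to $R_n(x_1')$, $x_1'e_2'$, and $\operatorname{Sq}_{\mathbb{Z}}^3(e_2')$, respectively, which therefore generate $H^7(\mathbf{B}P(n,mn);\mathbb{Z})$, while the image of $\kappa_5$ supplies the relation. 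Hence the additive generation assertion is immediate, and only the precise shape of the relation remains to be analyzed.

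Next, under Convention \ref{convention} we have $\epsilon_3(n) = \epsilon_3(m) = \epsilon_3(mn) = 1$, so by (\ref{H^7(K*K)}) the element $R_n\times 1$ generates a cyclic subgroup of order $n = 2^r$ inside $H^7(K(\mathbb{Z}/n,2)\times K(\mathbb{Z},4);\mathbb{Z})$. The $2$-torsion subgroup of this $\mathbb{Z}/2^r$ is exactly $\{0,\ 2^{r-1}(R_n\times 1)\}$. Combined with (\ref{k_5}), this forces
\[
\kappa_5 = \mu(R_n\times 1) + \beta_n\times\iota_4 + 1\times\Gamma_4
\]
for some $\mu\in\{0, 2^{r-1}\}$. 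Pushing this identity forward through (\ref{bookkeeping}) yields the asserted relation (\ref{k_5'}).

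Finally, to verify that only one of the two values of $\mu$ can actually occur, I would argue by contradiction. If both the $\mu=0$ and $\mu=2^{r-1}$ relations held in $H^7(\mathbf{B}P(n,mn);\mathbb{Z})$, subtracting them would give $2^{r-1}R_n(x_1') = 0$. However, Lemma \ref{cardH^7BP}(2), evaluated under Convention \ref{convention}, asserts that $R_n(x_1')$ has order precisely $n = 2^r$, contradicting $2^{r-1}R_n(x_1')=0$. This uniqueness step is the only mildly subtle point, and it relies crucially on the exact order computation in Lemma \ref{SE} via the Serre spectral sequence of the fibration $\mathbf{B}SU_{mn}\to\mathbf{B}P(n,mn)\to K(\mathbb{Z}/n,2)$.
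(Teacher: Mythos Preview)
Your proof is correct and follows essentially the same approach as the paper: the generation and the form of the relation are read off from Lemma \ref{H7} together with (\ref{k_5}), and uniqueness is obtained by contradiction via Lemma \ref{cardH^7BP}. The only minor difference is that you invoke part (2) of Lemma \ref{cardH^7BP} (the exact order of $R_n(x_1')$ is $2^r$, so $2^{r-1}R_n(x_1')\neq 0$), whereas the paper invokes part (3) (the total order of $H^7(\mathbf{B}P(n,mn);\mathbb{Z})$ would drop to a divisor of $2^r$ rather than $2^{r+1}$); your variant is arguably a touch more direct.
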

\begin{proof}
The definition of $\kappa_5$ and \eqref{k_5} implies that at least one of the two cases holds. For the uniqueness, notice that if both relations hold, then we have
\[2^{r-1}R_n(x_1')=0\textrm{ and }x_1'e_2'+\operatorname{Sq}_{\mathbb{Z}}^3(e_2')=0.\]
Hence the abelian group $H^7(\mathbf{B}P(n,mn);\mathbb{Z})$ is generated by $R_n(x_1')$ and $\operatorname{Sq}_{\mathbb{Z}}^3(e_2')$, whose orders divide $2^{r-1}$ and $2$, respectively. Therefore the order of the group $H^7(\mathbf{B}P(n,mn);\mathbb{Z})$ divides $2^r$, violating Lemma \ref{cardH^7BP}.
\end{proof}


We turn to the hard work of determining $\mu$, for which we will need an auxiliary space $Y$ to be defined and studied as follows. Recall the homotopy equivalence (\ref{eq:directprod})
\[\mathbf{B}P(n,mn)[6]=\mathbf{B}P(n,mn)[5]\simeq K(\mathbb{Z}/n,2)\times K(\mathbb{Z},4).\]
Consider the following map
\[\mathbf{B}P(n,mn)\rightarrow\mathbf{B}P(n,mn)[6]\rightarrow K(\mathbb{Z},4)\]
such that both arrows above are the canonical maps. We may normalize this map so that it represents the generator $e_2'$ of $H^4(\mathbf{B}P(n,mn);\Z)$. We denote by $Y$ its homotopy fiber. In other words, we have a fiber sequence
\begin{equation}\label{fiber}
Y\rightarrow\mathbf{B}P(n,mn)\xrightarrow{e_2'} K(\mathbb{Z},4)
\end{equation}
where the second arrow is an additive generator $e_2'$ of $H^4(\mathbf{B}P(n,mn);\Z)$. The space $Y$ plays a key role in determining the coefficient $\mu$. By construction the second arrow in (\ref{fiber}) induces an isomorphism of homotopy groups
\[\pi_4(\mathbf{B}P(n,mn))\cong \pi_4(K(\mathbb{Z},4)).\]
This isomorphism lies in the long exact sequence of homotopy groups of the fiber sequence (\ref{fiber}), from which, together with (\ref{Bott}), we deduce
\begin{equation}\label{piY}
\pi_i(Y)\cong
\begin{cases}
\mathbb{Z}/n, \textrm{ if }n=2,\\
\mathbb{Z},\quad\textrm{if $6\leq i<2mn+1$, and $i$ is even},\\
0, \quad\textrm{if $0<i<2mn$, and $i$ is odd, or }i=4.\\
\end{cases}
\end{equation}

By taking the loop space of the last term of (\ref{fiber}) we obtain another fiber sequence
\begin{equation}\label{mainfiber}
K(\mathbb{Z},3)\xrightarrow{h}Y\rightarrow\mathbf{B}P(n,mn).
\end{equation}

Consider the canonical map from $Y$ to the bottom level of its Postnikov tower
\[g: Y\rightarrow K(\mathbb{Z}/n,2).\]
\begin{lemma}\label{H^6Y}
\begin{enumerate}[label=\textbf{Y.\arabic*}]
\item\label{item:g Y}
The induced homomorphisms
\[g^*: H^k(K(\mathbb{Z}/n,2);\mathbb{Z})\rightarrow H^k(Y;\mathbb{Z})\]
are isomorphisms for $k\leq 5$.
\item\label{item:H6Y}
\begin{equation*}
H^6(Y;\mathbb{Z}/2)=g^*(H^6(K(\mathbb{Z}/n,2);\mathbb{Z}/2))+ (\bar{\omega}),
\end{equation*}
where $\omega\in H^6(Y;\Z)$ generates an infinite cyclic group, and $\bar{\omega}$ is its reduction in cohomology with coefficients in $\mathbb{Z}/2$.
\end{enumerate}
\end{lemma}

\begin{proof}
It follows from (\ref{piY}) that we have the following partial picture of the Postnikov tower of Y:
\begin{equation*}
\begin{tikzcd}
                        &Y[7]\arrow{d}{=}\arrow{r}&K(\mathbb{Z},9)\\
K(\mathbb{Z},6)\arrow{r}&Y[6]\arrow{d}                      &\\
                        &Y[2]=K(\mathbb{Z}/n,2)\arrow{r}&K(\mathbb{Z},7)
\end{tikzcd}
\end{equation*}
The statements then follows from a simple observation on the fiber sequence
\[K(\mathbb{Z},6)\rightarrow Y[6]\rightarrow K(\mathbb{Z}/n,2)\]
and the Serre spectral sequence associated to it.
\end{proof}

Consider the fiber sequence (\ref{mainfiber}). Let $(E_{*}^{*,*}(\mathbb{Z}), d_{*}^{*,*})$ and $(E_{*}^{*,*}(\mathbb{Z}/2), \bar{d}_{*}^{*,*})$ denote the associated cohomological Serre spectral sequences with coefficients in $\mathbb{Z}$ and $\mathbb{Z}/2$, respectively.

We first consider the case with coefficients in $\mathbb{Z}/2$. This is easier since $\mu\equiv 0$ mod $2$ whatsoever. We study the homomorphism
\[h^*:H^6(Y;\mathbb{Z}/2)\rightarrow H^6(K(\mathbb{Z},3);\mathbb{Z}/2)\]
with the spectral sequence $E_{*}^{*,*}(\mathbb{Z}/2)$, from which, with a little luck, we obtain enough information on the homomorphism
\[h^*:H^6(Y;\mathbb{Z})\rightarrow H^6(K(\mathbb{Z},3);\mathbb{Z})\]
to determine a particular differential of the spectral sequence $E_{*}^{*,*}(\mathbb{Z})$, which in turn determines the coefficient $\mu$.

As in Lemma \ref{H^6Y}, we use overhead bars to denote the mod $2$ reductions of integral cohomology classes.

\begin{lemma}\label{lem:Z and Z/2}
\begin{enumerate}[label=\textbf{S.\arabic*}]
\item\label{item:iota}
The element
\[\iota_3\in H^3(K(\Z,3);\Z)\cong E_2^{0,3}(\Z)\]
is transgressive, satisfying
\[d_4^{0,3}(\iota_3)=\pm e_2',\]
and so is $\iota_3^2$:
\[d_7^{0,6}(\iota_3^2)=\operatorname{Sq}^3_Z(e_2').\]
\item\label{item:iota mod2}
The same holds in $E_*^{*,*}(\Z/2)$, i.e., $\bar{\iota}_3$ and $\bar{\iota_3}^2$ are both transgressive such that we have
\[\bar{d}_4^{0,3}(\bar{\iota}_3)=\pm \bar{e_2}',\]
and so is $\bar{\iota}_3^2$:
\[\bar{d}_7^{0,6}(\bar{\iota}_3^2)=\operatorname{Sq}^3(\bar{e_2}').\]
\item\label{item:quotients}
We have
\[E_4^{7,0}(\Z)=E_5^{7,0}(\Z)=E_6^{7,0}(\Z)\cong H^7(\mathbf{B}P(n,mn);\mathbb{Z})/(x_1'e_2')\]
and similarly
\[E_4^{7,0}(\Z/2)=E_5^{7,0}(\Z/2)=E_6^{7,0}(\Z/2)\cong H^7(\mathbf{B}P(n,mn);\mathbb{Z}/2)/(\bar{x}_1'\bar{e}_2').\]
\end{enumerate}
\end{lemma}
\begin{proof}
These follows from routine computations and may be read off from Figure \ref{Z/2 figure} and its integral analog. For \ref{item:iota} and \ref{item:iota mod2} we need the fact that transgressions commute with stable cohomology operations. (See \cite{Mc}.)
\end{proof}

\begin{lemma}\label{Z}
In the spectral sequence $E_{*}^{*,*}(\mathbb{Z})$, the class
\[\iota_3^2\in H^6(Y,\Z)\cong E_{2}^{0,6}(\Z)\]
is a permanent cocycle.
\end{lemma}
\begin{proof}
To show that $\iota_3^2$ is a permanent cocycle is to show that the homomorphism
\[h^*:H^6(Y;\mathbb{Z})\rightarrow H^6(K(\mathbb{Z},3);\mathbb{Z})\]
is surjective. Indeed, we show
\begin{equation}\label{rhoZ/2}
h^*(\omega)=\iota_3^2.
\end{equation}
Since $\iota_3^2$ is of order $2$, it suffices to show the mod $2$ version of (\ref{rhoZ/2}), i.e.,
\begin{equation}\label{rho mod2}
h^*(\bar{\omega})=\bar{\iota}_3^2.	
\end{equation}
It follows from Lemma \ref{statement} that we have
\begin{equation}\label{relationZ/2}
\bar{x}_1'\bar{e}_2'+\operatorname{Sq}^3(\bar{e}_2')=0\in H^7(\mathbf{B}P(n,mn);\mathbb{Z}/2).
\end{equation}
This relation, together with Lemma \ref{lem:Z and Z/2}, \ref{item:quotients}, shows
\[\operatorname{Sq}^3(\bar{e}'_2)\equiv0\in E_{5}^{7,0}(\mathbb{Z}/2)\cong H^7(\mathbf{B}P(n,mn);\mathbb{Z}/2)/(\bar{x}_1'\bar{e}_2').\]
Then it follows that $\bar{d}_{7}^{0,6}(\bar{\iota}_3^2)=0$, by Lemma \ref{lem:Z and Z/2}, \ref{item:iota mod2}. Hence, $\bar{\iota}_3^2$ is a permanent cocycle, which proves that $\bar{\iota}_3^2$ is in the image of $h^*$, and in particular, that $h^*$ is surjective in dimension $6$ and with coefficients in $\mathbb{Z}/2$.

To verify $h^*(\bar{\omega})=\bar{\iota}_3^2$, notice that $g$ factors as
$$Y\rightarrow\mathbf{B}P(n,mn)\rightarrow K(\mathbb{Z}/n,2)$$
since the map $Y\rightarrow\mathbf{B}P(n,mn)$ induces an isomorphism $\pi_2(Y)\cong\pi_2(\mathbf{B}P(n,mn))$. In particular, it follows that
\[g\circ h: K(\mathbb{Z},3)\rightarrow Y\rightarrow K(\mathbb{Z}/n,2)\]
is null homotopic. On the other hand, recall that Lemma \ref{H^6Y}, \ref{item:H6Y} asserts
\begin{equation*}
H^6(Y;\mathbb{Z}/2)=g^*(H^6(K(\mathbb{Z}/n,2);\mathbb{Z}/2))+ (\bar{\omega}),
\end{equation*}
so $h^*$ takes the first direct summand to $0$. It then follows from the surjectivity of $h^*$ that we have $h^*(\bar{\omega})=\bar{\iota}_3^2$, and we conclude that $\iota_3^2$ is a permanent cocycle.
\end{proof}

\begin{figure}[!]
\begin{tikzpicture}
\draw[step=0.5cm,gray, thick];
\draw [ thick, <->] (0,6)--(0,0)--(7,0);

\node [below] at (3,0) {$3$};
\node [below] at (4,0) {$4$};
\node [below] at (7,0) {$7$};
\node [below] at (3,3) {$(3,3)$};
\node [left] at (0,2) {$2$};
\node [left] at (0,3) {$3$};
\node [left] at (0,4) {$4$};
\node [left] at (0,5) {$5$};
\node [left] at (0,6) {$6$};
\node [above right] at (4,0) {$\bar{e_2'}$};
\node [above right] at (0,3) {$\bar{\iota}_3$};
\node [above left] at (3,3) {$\bar{\iota}_3\bar{x_1'}$};
\node [above right] at (7,0) {$\bar{e_2'}\bar{x_1'}, \operatorname{Sq}^3(\bar{e_2'})$};
\node [above right] at (0,6) {$\bar{\iota}_3^2$};
\shadedraw (0,0) rectangle (3,2);
\shadedraw (0,4) rectangle (3,5);
\draw (0,3) [thick, ->] to (4,0);
\draw (3,3) [thick, ->] to (7,0);
\draw (0,6) [dashed, ->] to (7,0);
\end{tikzpicture}
\caption{Differentials of the spectral sequence $E_{*}^{*,*}(\mathbb{Z}/2)$, i.e., the Serre spectral sequence associated to the fiber sequence
	\[K(\Z,3)\rightarrow Y\rightarrow\mathbf{B}P(n,mn).\]
The shaded areas represent some groups on the $E_2$ page that are $0$, and the dashed arrow represents a trivial differential.}\label{Z/2 figure}
\end{figure}

We proceed to determine $\kappa_5$.
\begin{proposition}\label{determinek_5}
Under Convention \ref{convention}, we have
\[\kappa_5=\beta_n\times\iota_4+1\times\Gamma_4.\]
\end{proposition}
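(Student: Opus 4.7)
The plan is to determine the coefficient $\mu \in \{0, 2^{r-1}\}$ from Lemma \ref{statement} by analyzing the integral Serre spectral sequence $(E_*^{*,*}(\mathbb{Z}), d_*^{*,*})$ of the fiber sequence (\ref{mainfiber}) $K(\mathbb{Z}, 3) \xrightarrow{h} Y \to \mathbf{B}P(n, mn)$, upgrading the mod $2$ analysis of Lemma \ref{Z/2} to integral coefficients. A mod $2$ argument alone cannot distinguish the two candidate values of $\mu$ since $2^{r-1} \equiv 0 \pmod{2}$ for $r > 1$ (Convention \ref{convention}), so an integral computation is essential.

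First, I would identify the relevant transgressions. By construction of $Y$ we have $d_4^{0,3}(\iota_3) = e_2'$, and by multiplicativity $d_4^{3,3}(x_1' \iota_3) = x_1' e_2'$, so the image of $d_4$ in $E_4^{7,0} \cong H^7(\mathbf{B}P(n, mn); \mathbb{Z})$ is the cyclic subgroup $\langle x_1' e_2' \rangle$. Combining Proposition \ref{stable Gamma} with the ring relation $\iota_3^2 = \Gamma_3 = \operatorname{Sq}_{\mathbb{Z}}^3(\iota_3)$ from (\ref{K(Z,n)}) at $n = 3$, and using that stable cohomology operations commute with transgression, yields
\[d_7^{0,6}(\iota_3^2) \equiv \operatorname{Sq}_{\mathbb{Z}}^3(e_2') \pmod{\langle x_1' e_2' \rangle}.\]
By Lemma \ref{Z}, $\iota_3^2 = h^*(\omega)$ lies in the image of $h^*$, hence is a permanent cocycle, so the left-hand side vanishes in $E_7^{7,0}$, forcing $\operatorname{Sq}_{\mathbb{Z}}^3(e_2') \in \langle x_1' e_2' \rangle$ in $H^7(\mathbf{B}P(n, mn); \mathbb{Z})$.

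Second, I would combine this inclusion with the abstract structure of $H^7(\mathbf{B}P(n, mn); \mathbb{Z})$. Its order is $2^{r+1}$ by Lemma \ref{cardH^7BP}, and by Lemma \ref{statement}, after using the relation to eliminate $x_1' e_2'$, the group is generated by $R_n(x_1')$ (of order $2^r$) and $\operatorname{Sq}_{\mathbb{Z}}^3(e_2')$ (of order dividing $2$). The order count forces these generators to be independent, so $H^7(\mathbf{B}P(n, mn); \mathbb{Z}) \cong \mathbb{Z}/2^r \oplus \mathbb{Z}/2$ with $R_n(x_1') \leftrightarrow (1, 0)$ and $\operatorname{Sq}_{\mathbb{Z}}^3(e_2') \leftrightarrow (0, 1)$. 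In this decomposition, $x_1' e_2' = -\mu R_n(x_1') - \operatorname{Sq}_{\mathbb{Z}}^3(e_2') = (-\mu, 1)$, so $\langle x_1' e_2' \rangle = \{(0, 0), (-\mu, 1)\}$ contains $(0, 1) = \operatorname{Sq}_{\mathbb{Z}}^3(e_2')$ only when $-\mu \equiv 0 \pmod{2^r}$; since $r > 1$ rules out $\mu = 2^{r-1}$, this forces $\mu = 0$ and hence $\kappa_5 = \beta_n \times \iota_4 + 1 \times \Gamma_4$.

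The hard part will be justifying the integral Kudo-type identity $d_7^{0,6}(\iota_3^2) \equiv \operatorname{Sq}_{\mathbb{Z}}^3(e_2') \pmod{\langle x_1' e_2' \rangle}$: it requires both the explicit identification $\iota_3^2 = \operatorname{Sq}_{\mathbb{Z}}^3(\iota_3)$ and the stability of $\operatorname{Sq}_{\mathbb{Z}}^3$ as a cohomology operation (Proposition \ref{stable Gamma}), so that it commutes with transgression in the integral Serre spectral sequence.
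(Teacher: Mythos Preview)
Your proposal is correct and follows essentially the same route as the paper: you analyze the integral Serre spectral sequence of (\ref{mainfiber}), use the transgression $d_4^{0,3}(\iota_3)=e_2'$ together with $\iota_3^2=\operatorname{Sq}_{\mathbb{Z}}^3(\iota_3)$ and Kudo's theorem to get $d_7^{0,6}(\iota_3^2)=\operatorname{Sq}_{\mathbb{Z}}^3(e_2')$ in $E_7^{7,0}$, invoke Lemma~\ref{Z} to see that $\iota_3^2$ is a permanent cocycle, and conclude $\operatorname{Sq}_{\mathbb{Z}}^3(e_2')\in\langle x_1'e_2'\rangle$. The only cosmetic difference is in the endgame: the paper writes this inclusion as $\nu x_1'e_2'+\operatorname{Sq}_{\mathbb{Z}}^3(e_2')=0$, rules out $\nu=0$ via the order of $H^7(\mathbf{B}P(n,mn);\mathbb{Z})$, and then appeals to the uniqueness clause of Lemma~\ref{statement}, whereas you identify $H^7\cong\mathbb{Z}/2^r\oplus\mathbb{Z}/2$ explicitly and read off $\mu=0$ from coordinates; these are two phrasings of the same computation.
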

\begin{proof}
By Lemma \ref{lem:Z and Z/2}, \ref{item:iota}, we have
\begin{equation}\label{iota_3^2}
d_7^{0,6}(\iota_3^2)=\operatorname{Sq}_{\mathbb{Z}}^3(e_2')=0\in E_6^{7,0}(\Z)\cong H^7(\mathbf{B}P(n,mn);\mathbb{Z})/(x_1'e_2').
\end{equation}
Therefore, by Lemma \ref{lem:Z and Z/2}, \ref{item:quotients}, we have
\begin{equation}\label{nu}
\nu x_1'e_2'+\operatorname{Sq}_{\mathbb{Z}}^3(e_2')=0\in H^7(\mathbf{B}P(n,mn);\mathbb{Z}).
\end{equation}
for $\nu=0$ or $\nu=1$. On the other hand, by (\ref{relationZ/2}) in the proof of Lemma \ref{Z}, we have
\begin{equation}\label{eq:Z relation}
x_1'e_2'+\operatorname{Sq}_{\mathbb{Z}}^3(e_2')\equiv 0\mod 2.
\end{equation}
Combining (\ref{nu}) and (\ref{eq:Z relation}) we have $\nu=1$, since otherwise we have $\bar{x}_1'\bar{e}_2'=0$, a contradiction. By Lemma \ref{statement}, we may now conclude.
\end{proof}

\begin{proof}[Proof of Proposition \ref{mainprop} and Theorem \ref{main}] Let $X=\opn{sk}_8(K(\Z/n,2))$ be the $8$-skeleton of $K(\Z/n,2)$, and $\alpha\in H^3(X;\Z)$ the restriction of the canonical generator $\beta_n\in H^3(K(\Z/n,2);\Z)$. Recall that it suffices to work under Convention \ref{convention}:
\[n=2^r,\ m=2^{2r}\ r>1.\]
It follows from Theorem \ref{last} that $\opn{ind}(\alpha)$ is either $2^{3r}$ or $2^{3r+1}$. It suffices to show $\opn{ind}(\alpha)=2^{2r}$, for which we proceed to study the homotopy lifting problem discussed in the introduction (\ref{lift'}):
\begin{equation}\label{lifting}
\begin{tikzcd}
                                        &\mathbf{B}P(2^r,2^{3r})\arrow{d}\\
X\arrow[r,hook]\arrow[ur,dashrightarrow]&K(\mathbb{Z}/2^r,2)
\end{tikzcd}
\end{equation}
where the bottom arrow is the obvious inclusion. By construction, this diagram is equivalent to
\begin{equation}\label{lifting1}
\begin{tikzcd}
                                                      &\mathbf{B}P(2^r,2^{3r})[7]\arrow{d}\\
K(\mathbb{Z}/2^r,2)\arrow{r}{=}\arrow[ur,dashrightarrow]&K(\mathbb{Z}/2^r,2)
\end{tikzcd}
\end{equation}
By (\ref{eq:directprod}), we have the following
\begin{equation}\label{lifting2}
\begin{tikzcd}
                                                      &\mathbf{B}P(2^r,2^{3r})[7]\arrow{d}{=}&\\
                                                      &\mathbf{B}P(2^r,2^{3r})[6]\arrow{d}&\\
                                                      &\mathbf{B}P(2^r,2^{3r})[5]\simeq K(\mathbb{Z}/2^r,2)\times K(\mathbb{Z},4)\arrow{d}\arrow{r}{\kappa_5}&K(\mathbb{Z},7)\\
K(\mathbb{Z}/n,2^r)\arrow{r}{f_3=\operatorname{Id}}\arrow{ur}{f_5}\arrow[uur,bend left,dashrightarrow]&K(\mathbb{Z}/2^r,2)&
\end{tikzcd}
\end{equation}
where the map $f_5$ is the obvious inclusion. Therefore $f_5^*$ annihilates all cohomology classes of $K(\mathbb{Z},4)$ in positive degrees, in particular, $\iota_4$ and $\Gamma_4$. Therefore, by Proposition \ref{determinek_5}, we have
\[f_5^*(\kappa_5)=f_5^*(\beta_{2^r}\times\iota_4+1\times\Gamma_4)=0,\]
and the dashed arrow in (\ref{lifting2}) exists. Therefore, by Proposition \ref{lift to BP} we have $\opn{ind}(\alpha)\neq 2^{3r+1}$, and we have proved Proposition \ref{mainprop}.

It remains to show the divisibility relations (\ref{bound1}). For an arbitrary finite CW complex $X$ of dimension $8$ and $\alpha\in H^3(X;\Z)_{\textrm{tor}}$ of period $n$, take $\alpha'\in H^2(X;\Z/n)$ such that the Bockstein homomorphism takes $\alpha'$ to $\alpha$. Then $\alpha'$, up to homotopy, gives rise to a map $X\rightarrow K(\Z/n,2)$. Now apply Proposition \ref{mainprop} and compare the twisted AHSSs of the pairs $(X,\alpha)$ and $(K(\Z/n,2),\beta_n)$. Theorem \ref{main} follows from the naturality of the twisted AHSSs together with Theorem \ref{AH diff}.
\end{proof}
\bibliographystyle{abbrv}
\bibliography{tpip8IIref}

\begin{thebibliography}{10}

\bibitem{An}
B.~Antieau and B.~Williams.
\newblock The topological period--index problem over $6$-complexes.
\newblock {\em Journal of Topology}, 7(3):617--640, 2013.

\bibitem{An1}
B.~Antieau and B.~Williams.
\newblock The period-index problem for twisted topological {$K$}-theory.
\newblock {\em Geometry \& Topology}, 18(2):1115--1148, 2014.

\bibitem{An2}
B.~Antieau and B.~Williams.
\newblock Prime decomposition for the index of a {B}rauer class.
\newblock {\em Annali della Scuola Normale Superiore di Pisa. Classe di
  scienze}, 17(1):277--285, 2017.

\bibitem{At}
M.~Atiyah and G.~Segal.
\newblock Twisted {$K$}-theory.
\newblock {\em Ukrainian Mathematical Bulletin}, 1(3):287--330, 2004.

\bibitem{At1}
M.~Atiyah and G.~Segal.
\newblock Twisted {$K$}-theory and cohomology.
\newblock {\em Inspired by S. S. Chern, Nankai Tracts Math.}, 11:5--43, 2006.

\bibitem{Co}
J.-L. Colliot-Th{\'e}lene.
\newblock Exposant et indice d'alg{\`e}bres simples centrales non
  ramifi{\'e}es.
\newblock {\em ENSEIGNEMENT MATHEMATIQUE}, 48(1/2):127--146, 2002.

\bibitem{Cr}
D.~Crowley and M.~Grant.
\newblock The topological period-index conjecture for ${S}pin^{c}$
  $6$-manifolds.
\newblock {\em arXiv preprint arXiv:1802.01296}, 2018.

\bibitem{Gr}
A.~Grothendieck.
\newblock Le groupe de {B}rauer. {I}. algebres d'{A}zumaya et
  interpr{\'e}tations diverses.
\newblock {\em Dix expos{\'e}s sur la cohomologie des sch{\'e}mas}, 3:46--66,
  1968.

\bibitem{Gu}
X.~Gu.
\newblock The topological period--index problem over 8-complexes, {I}.
\newblock {\em Journal of Topology}, 12(4):1368--1395, 2019.

\bibitem{Mc}
J.~McCleary.
\newblock {\em A user's guide to spectral sequences}.
\newblock Number~58. Cambridge University Press, 2001.

\bibitem{Ca}
{\'E}.~normale~sup{\'e}rieure (France) and H.~Cartan.
\newblock {\em S{\'e}minaire {Henri Cartan}: ann. 7 1954/1955; Alg{\`e}bres
  {d'Eilenberg-Maclane} et homotopie}.
\newblock Secretariat Mathematique, 1958.

\end{thebibliography}


\end{document}